\newtheorem{theorem}{Theorem}[section]
\newtheorem{lemma}[theorem]{Lemma}
\newtheorem{proposition}[theorem]{Proposition}
\newtheorem{corollary}[theorem]{Corollary}
\theoremstyle{definition}
\theoremstyle{remark}
\newtheorem{remark}[theorem]{Remark}
\numberwithin{equation}{section}
\begin{document}
\title[On a Paley-type graph on $\mathbb{Z}_n$]
{On a Paley-type graph on $\mathbb{Z}_n$}


\author{Anwita Bhowmik}
 \address{Department of Mathematics, Indian Institute of Technology Guwahati, North Guwahati, Guwahati-781039, Assam, INDIA}
\email{anwita@iitg.ac.in}
 \author{Rupam Barman}
 \address{Department of Mathematics, Indian Institute of Technology Guwahati, North Guwahati, Guwahati-781039, Assam, INDIA}
 \email{rupam@iitg.ac.in}


\subjclass[2020]{05C30; 11T24; 11T30; 05C55}
\date{Revised: September 30, 2021: To appear at Graphs and Combinatorics.}
\keywords{Paley graphs; finite fields; Dirichlet characters; character sums}
\begin{abstract}
Let $q$ be a prime power such that $q\equiv 1\pmod{4}$. The Paley graph of order $q$ is the graph with vertex set as the finite field $\mathbb{F}_q$ and edges defined as, $ab$ is an edge if and only if $a-b$ is a non-zero square in $\mathbb{F}_q$. We attempt to construct a similar graph of order $n$, where $n\in\mathbb{N}$. For suitable $n$, we construct the graph where the vertex set is the finite commutative ring $\mathbb{Z}_n$ and edges defined as, $ab$ is an edge if and only if $a-b\equiv x^2\pmod{n}$ for some unit $x$ of $\mathbb{Z}_n$. We look at some properties of this graph. For  primes $p\equiv 1\pmod{4}$, Evans, Pulham and Sheehan computed the number of complete subgraphs of order 4 in the Paley graph. Very recently, Dawsey and McCarthy find the number of complete subgraphs of order 4 in the generalized Paley graph of order $q$. In this article, for primes $p\equiv 1\pmod{4}$ and any positive integer $\alpha$, we find the number of complete subgraphs of order 3 and 4 in our graph defined over $\mathbb{Z}_{p^{\alpha}}$.
\end{abstract}
\maketitle
\section{Introduction and statement of main results}
Let $q$ be a prime power such that $q\equiv 1\pmod{4}$. Let $\mathbb{F}_q$ denote the finite field with $q$ elements. Let $\mathbb{F}_q^{\ast}=\mathbb{F}_q\setminus \{0\}$ and let $S_2$ be its subset of squares. The Paley graph of order $q$ is the graph with vertex set $\mathbb{F}_q$ where $ab$ is an edge if and only if $a-b\in S_2$. The Paley graphs are connected, self-complementary and strongly regular with parameters $\left( q,\dfrac{q-1}{2},\dfrac{q-5}{4}, \dfrac{q-1}{4} \right)$, see for example \cite{elsawy2012paley}. Paley graphs were first introduced in $1933$ by R.E.A.C. Paley \cite{paley1933orthogonal}. Please see \cite{jones2017paley} for a nice survey on Paley graphs.
\par 
Let $G^{(n)}$ denote a graph on $n$ vertices and let $\overline{G^{(n)}}$ be its complement. 
Let $\mathcal{K}_m(G)$ denote the number of complete subgraphs of order $m$ in a graph $G$. Let $T_m(n)=\text{min}\left(\mathcal{K}_m(G^{(n)})+ \mathcal{K}_m(\overline{G^{(n)}})\right) $ where the minimum is taken over all graphs on $n$ vertices. Erd$\ddot{\text{o}}$s \cite{erdos1962number} proved that $$T_m(n)\leq \dfrac{{n\choose m} }{2^{{m\choose 2}-1}} $$ and conjectured that $\lim\limits_{n\rightarrow\infty} T_m(n)/{n\choose m}=2^{1-{m\choose 2}}$. Subsequent attempts by Goodman \cite{goodman1959sets} and Thomason \cite{thomason} generated the interest to calculate $T_m(n)$ for different $m\in\mathbb{N}$.
\par  In $1981$, Evans, Pulham and Sheehan \cite{evans1981number} gave a simple closed formula to calculate $\mathcal{K}_4(G(p))$, $G(p)$ being the Paley graph of order $p$ where $p$ is a prime and $p\equiv 1\pmod{4}$. Write $p=a^2+b^2$ where $a, b\in\mathbb{Z}$, and $a$ is even. Then 
$$\mathcal{K}_4(G(p))=\dfrac{p(p-1)((p-9)^2-4a^2)}{2^9\times 3}.$$ This work was extended by Atanasov et al. \cite {atanasov2014certain} for a prime power $q=p^n\equiv 1\pmod{4}$ when $p\equiv 1\pmod{4}$. 
\par
Some generalizations of Paley graphs have been studied too. Ananchuen and Caccetta \cite{ananchuen2001adjacency, ananchuen1993adjacency, ananchuen2006cubic} studied some properties of the cubic and quadruple graphs. Let $q=p^n$ with odd prime $p$, $n\in\mathbb{N}$. Let $S_3=\{x^3: x\in \mathbb{F}_q^{\ast}\}$ and $S_4=\{x^4: x\in \mathbb{F}_q^{\ast}\}$. For $q\equiv 1\pmod{3}$, the graph with vertex set $\mathbb{F}_q$ and edges $ab$ where $a-b\in S_3$ is called the cubic Paley graph. For $q\equiv 1\pmod{8}$, the graph with vertex set $\mathbb{F}_q$ and edges $ab$ where $a-b\in S_4$ is called the quadruple Paley graph. In 2006, Lim and Praeger \cite{lim2006generalised} generalized further: let $k (\in\mathbb{N})\geq 2$ and $q$ be a prime power such that $q\equiv 1\pmod{k}$ if $q$ is even, or $q\equiv 1 \pmod{2k}$ if $q$ is odd; then the generalized Paley graph $G_k(q)$ is the graph with vertex set $\mathbb{F}_q$ where $ab$ is an edge if $a-b$ is a $k$-th power residue. In a very recent paper, Dawsey and McCarthy \cite{dawsey2020generalized} have computed the number of complete subgraphs of order four in Lim and Praeger's graph using finite field hypergeometric functions, which in turn, generalizes the results of Evans et al. for $G_k(q)$. Wage \cite{wage2006character} constructed three other generalizations. Let $p$ be a prime and $t\in\mathbb{F}_p$ be fixed. All the three graphs, denoted by $G_t(p)$, ${G_t}^{\prime}(p)$ and $H_t(p)$, have $\mathbb{F}_p$ as their vertex set. In $G_t(p)$, $xy$ is an edge if $x-ty$ and $y-tx$ are quadratic residues mod $p$, and ${G_t}^{\prime}(p)$ has an edge $xy$ if either $x-ty$ or $y-tx$ is a quadratic residue mod $p$. Finally, the directed graph $H_t(p)$ has an edge $x\rightarrow y$ if both $x$ and $x-ty$ are quadratic residues mod $p$.
\par 
A natural question that arises is what the analogue of a Paley graph can be if the vertex set has $n$ vertices, in general, where $n$ is a natural number. The property $q\equiv 1\pmod{4}$ for a Paley graph of order $q$ ensures that $-1$ is a quadratic residue in $\mathbb{F}_q^{\ast}$, so an edge is well-defined. Let $\mathbb{Z}_n^{\ast}$ be the multiplicative group of units of the commutative ring $\mathbb{Z}_n$. As we will prove in Proposition \ref{prop1}, it turns out that $-1$ is a square in $\mathbb{Z}_n^{\ast}$ if and only if $n$ takes the form $n=2^s p_{1}^{\alpha_{1}}\cdots p_{k}^{\alpha_{k}}$, where the distinct primes $p_i$ satisfy $p_i\equiv 1\pmod{4}$ for all $i=1, \ldots, k$, and $s=0$ or $1$, excluding the cases $n=1$ or $n=2$ where the graphs are empty and trivial respectively. We now define a Paley-type graph on the commutative ring $\mathbb{Z}_n$, which we will denote by $G_n$. The graph $G_n$ is defined as the graph with vertex set $\mathbb{Z}_n$ where $ab$ is an edge if and only if $a-b\equiv x^2\pmod{n}$ for some $x\in\mathbb{Z}_n^{\ast}$. We look at some properties of this graph, especially those whose deductions involve similar approaches as done for the Paley graphs. 
In case of the Paley graph, character sums on $\mathbb{F}_p^{\ast}$ involving the unique quadratic character were evaluated. Here we do the same using Dirichlet characters modulo $n$. Although the graph is defined for more general values of $n$ as mentioned above, we restrict our attention to $n=p^\alpha$, where $p$ is a prime such that $p\equiv 1\pmod{4}$ and $\alpha\in\mathbb{N}$ so that $\mathbb{Z}_n^{\ast}$ is cyclic which guarantees that a unique quadratic character exists. We also omit the cases where $n$ is even since there cannot exist any complete subgraphs of order more than 2.
In the following theorem, for primes $p\equiv 1\pmod{4}$ and any positive integer $\alpha$, we find the number of complete subgraphs of order $3$ contained in the graph $G_{p^{\alpha}}$.
\begin{theorem}\label{t1}
	Let $p$ be a prime such that $p\equiv 1\pmod{4}$. For any positive integer $\alpha$, we have 
	 $$\mathcal{K}_3(G_{p^{\alpha}})=\dfrac{ p^{3\alpha-2}(p-1)(p-5)}{48}.$$
\end{theorem}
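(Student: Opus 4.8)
The plan is to count triangles through an indicator identity and then evaluate the resulting character sums. Since $\mathbb{Z}_{p^{\alpha}}^{\ast}$ is cyclic of even order $p^{\alpha-1}(p-1)$, there is a unique quadratic character $\chi$, and an element $c$ is the square of a unit precisely when $c$ is a unit with $\chi(c)=1$. Writing $f$ for the indicator of $\mathbb{Z}_{p^{\alpha}}^{\ast}$ (so $f(c)=1$ if $\gcd(c,p)=1$ and $f(c)=0$ otherwise), the edge indicator of a pair $ab$ becomes $\tfrac12\bigl(f(a-b)+\chi(a-b)\bigr)$, as one checks on the three cases (unit square, unit nonsquare, non-unit). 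Two facts will be used repeatedly: first, because $p\equiv 1\pmod 4$ we have $\chi(-1)=1$, so the edge relation is symmetric and both $\chi$ and $f$ are even; second, $\chi$ is the lift of the Legendre symbol, i.e.\ $\chi(c)=\left(\frac{c}{p}\right)$ for units $c$, so that $\chi(c)$ and $f(c)$ depend only on $c\bmod p$. This last point is what makes the prime-power case tractable.

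Next I would encode $\mathcal{K}_3(G_{p^{\alpha}})$ as $\tfrac16$ times the sum over ordered triples $(a,b,c)$ of the product of the three edge indicators; degenerate triples contribute $0$ automatically since $f(0)=\chi(0)=0$. Setting $X=a-b$, $Y=b-c$, $Z=c-a$ gives the symmetric constraint $X+Y+Z=0$, and summing out the free translation parameter contributes a factor $p^{\alpha}$, so that
$$\mathcal{K}_3(G_{p^{\alpha}})=\frac{p^{\alpha}}{6}\sum_{X+Y+Z=0}\tilde e(X)\,\tilde e(Y)\,\tilde e(Z),\qquad \tilde e=\tfrac12(f+\chi).$$
Expanding the product yields eight terms which, by the symmetry of the constraint and the evenness of $f,\chi$, collapse into four sums: one all-$f$ term $A$, three equal one-$\chi$ terms $B$, three equal two-$\chi$ terms $C$, and one all-$\chi$ term $D$, giving $\tfrac18(A+3B+3C+D)$.

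The main step is evaluating $A,B,C,D$, and here the reduction mod $p$ pays off: since every summand depends only on $(X,Y)\bmod p$ and each residue class mod $p$ has $p^{\alpha-1}$ lifts in $\mathbb{Z}_{p^{\alpha}}$, each sum equals $p^{2\alpha-2}$ times the corresponding sum over $\mathbb{F}_p^2$ with $\chi$ replaced by the Legendre symbol $\lambda$. The four resulting $\mathbb{F}_p$-sums are then elementary: a direct count gives $A_1=(p-1)(p-2)$; the one-$\chi$ sum vanishes because $\sum_{x\neq 0}\lambda(x)=0$; the two-$\chi$ sum reduces via $\lambda(-1)=1$ to $-(p-1)$; and the all-$\chi$ sum $\sum_{x,y}\lambda(x)\lambda(y)\lambda(x+y)$ vanishes using the standard identity $\sum_{y}\lambda(y)\lambda(y+x)=-1$ for $x\neq 0$. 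I expect the all-$\chi$ sum to be the only place needing a genuine character-sum identity, and keeping track of the imprimitivity of $\chi$ modulo $p^{\alpha}$ (the fact that it is really just the lifted Legendre symbol) to be the main conceptual obstacle of the argument.

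Finally, substituting $B=D=0$, $A=p^{2\alpha-2}(p-1)(p-2)$ and $C=-p^{2\alpha-2}(p-1)$ gives $A+3C=p^{2\alpha-2}(p-1)(p-5)$, whence
$$\mathcal{K}_3(G_{p^{\alpha}})=\frac{p^{\alpha}}{6}\cdot\frac18\,p^{2\alpha-2}(p-1)(p-5)=\frac{p^{3\alpha-2}(p-1)(p-5)}{48},$$
as claimed.
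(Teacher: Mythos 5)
Your proof is correct, but it follows a genuinely different route from the paper's. The paper normalizes twice using vertex-transitivity (Proposition \ref{prop3}): once for $G_{p^\alpha}$, reducing to triangles through $0$, and once for the subgraph $H$ induced on the unit squares, reducing to edges of $H$ through $1$; it then evaluates the sums $\sum\chi(x)$, $\sum\chi(1-x)$ and $\sum\chi(x(x-1))$ over those $x$ with $x$ and $x-1$ units, working directly modulo $p^\alpha$. You instead expand the ordered-triple count using the edge indicator $\tfrac12\bigl(f+\chi\bigr)$, factor out one power $p^{\alpha}$ by translation invariance, and then make the observation that does the real work: every summand factors through reduction modulo $p$, so each of your four sums $A,B,C,D$ equals $p^{2\alpha-2}$ times an elementary Legendre-symbol sum over $\mathbb{F}_p$. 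Both arguments rest on the same key fact --- your claim that $\chi$ is the lifted Legendre symbol is exactly Lemma \ref{3} of the paper --- but your route concentrates all of the prime-power structure into a single lifting count, needs no transitivity argument for the induced subgraph $H$, and replaces the paper's mod-$p^\alpha$ character-sum evaluations (Lemma \ref{4} and its consequences) by the classical identities $\sum_{y}\lambda(y)\lambda(y+x)=-1$ for $x\neq 0$ and $\sum_{x\neq 0}\lambda(x)=0$ over $\mathbb{F}_p$. What the paper's heavier setup buys is reuse: the subgraph $H$, the normalization through it, and the mod-$p^\alpha$ lemmas are precisely the scaffolding needed for the $\mathcal{K}_4$ count of Theorem \ref{t2}, where an expansion like yours no longer closes via elementary identities and genuine Jacobi sums appear.
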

In the following theorem, we find the number of complete subgraphs of order $4$ contained in the graph $G_{p^{\alpha}}$ for primes $p\equiv 1\pmod{4}$.
\begin{theorem}\label{t2}
Let $p$ be a prime such that $p\equiv 1\pmod{4}$, and let $\alpha$ be a positive integer. Let $\chi$ denote the unique quadratic Dirichlet character mod $p^{\alpha}$ and let $\psi$ be a Dirichlet character mod $p^{\alpha}$ of order $4$. Let $J(\psi,\chi)=\sum\limits_{x\in\mathbb{Z}_{p^{\alpha}}}\psi(x)\chi(1-x)$ be the Jacobi sum of $\psi$ and $\chi$. Then 
$$\mathcal{K}_4(G_{p^{\alpha}})=\dfrac{p^{2\alpha-1}(p-1)[p^{2\alpha-2}\left\lbrace (p-9)^2-2p \right\rbrace+J(\psi,\chi)^2+\overline{J(\psi,\chi)}^2 ]}{1536}.$$ 	
\end{theorem}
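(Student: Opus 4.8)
The plan is to count ordered $4$-tuples of vertices spanning a complete subgraph $K_4$ and divide by $4!=24$, using the symmetry group of $G_{p^{\alpha}}$ to cut the problem down to a two–variable character sum. First I would observe that every map $x\mapsto sx+t$ with $t\in\mathbb{Z}_{p^{\alpha}}$ and $s\in S:=\{x^2:x\in\mathbb{Z}_{p^{\alpha}}^{\ast}\}$ is an automorphism of $G_{p^{\alpha}}$, since $S$ is a subgroup of $\mathbb{Z}_{p^{\alpha}}^{\ast}$ of index $2$ and $a-b\in S\iff s(a-b)\in S$; moreover this affine group acts simply transitively on ordered edges (the pair $(0,1)$ being a representative, as $1\in S$). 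Hence if $N$ denotes the number of ordered $4$-tuples spanning a $K_4$, then $N=p^{\alpha}\,|S|\,M$ with $|S|=\tfrac12p^{\alpha-1}(p-1)$, where
$$M=\#\{(b,c):b,\,c,\,b-1,\,c-1,\,b-c\in S\}$$
counts the ordered pairs completing the edge $\{0,1\}$ to a $K_4$. This already gives $\mathcal{K}_4(G_{p^{\alpha}})=\tfrac{1}{24}N=\tfrac{1}{48}p^{2\alpha-1}(p-1)M$, so it remains to prove $M=\tfrac{1}{32}\big[p^{2\alpha-2}\{(p-9)^2-2p\}+J(\psi,\chi)^2+\overline{J(\psi,\chi)}^2\big]$.

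Next I would turn the five membership conditions into characters. For every $x$ the function $f(x):=\tfrac12(\chi(x)^2+\chi(x))$ equals $1$ if $x\in S$ and $0$ otherwise, because $\chi(x)^2=\chi_0(x)$ is the principal character (the indicator of units) while $\chi(x)=\pm1$ distinguishes squares from non-squares among units. Thus $M=\sum_{b,c}f(b)f(c)f(b-1)f(c-1)f(b-c)$, and expanding the five factors yields $M=\tfrac{1}{32}\Sigma$ with
$$\Sigma=\sum_{b,c\in\mathbb{Z}_{p^{\alpha}}}\ \prod_{d\in\{b,\,c,\,b-1,\,c-1,\,b-c\}}\big(\chi(d)^2+\chi(d)\big),$$
a sum of $32$ double character sums in which each factor contributes either $\chi$ or $\chi_0$.

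The decisive simplification is a reduction to the case $\alpha=1$. Since $\chi$ and $\psi$ have order coprime to $p$, they are trivial on the (odd order) Sylow $p$-subgroup of $\mathbb{Z}_{p^{\alpha}}^{\ast}$ and therefore factor through reduction modulo $p$; the same is true of $\chi_0$. Consequently every summand of $\Sigma$ depends only on $(b\bmod p,\,c\bmod p)$, and summing over the $p^{2(\alpha-1)}$ lifts of each residue pair gives $\Sigma=p^{2\alpha-2}\Sigma_1$, where $\Sigma_1$ is the analogous sum over $\mathbb{Z}_p\cong\mathbb{F}_p$. For the same reason $J(\psi,\chi)=p^{\alpha-1}J_p(\psi,\chi)$, so $J(\psi,\chi)^2+\overline{J(\psi,\chi)}^2=p^{2\alpha-2}\big(J_p^2+\overline{J_p}^2\big)$; both parts of the target therefore carry the common factor $p^{2\alpha-2}$, and the theorem reduces to the identity $\Sigma_1=(p-9)^2-2p+J_p^2+\overline{J_p}^2$ over $\mathbb{F}_p$.

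Finally I would evaluate $\Sigma_1$. Most of the $32$ terms collapse under the orthogonality relations $\sum_x\chi(x)=0$ and $\sum_x\chi(x(x-t))=-1$ for $t\neq0$, and together with the standard one-variable subsums they assemble into the polynomial main term $(p-9)^2-2p$. The real obstacle is the irreducibly two–dimensional contribution, typified by $\sum_{b,c}\chi\big(bc(b-1)(c-1)(b-c)\big)$: summing first over $c$ produces the cubic sum $\sum_c\chi\big(c(c-1)(b-c)\big)$, a point count on $y^2=c(c-1)(b-c)$, and carrying out the outer sum over $b$ with the Gauss–Jacobi sum machinery is what brings in the quartic Jacobi sum; this is exactly the step where the order-$4$ character $\psi$ enters, made possible by $p\equiv1\pmod4$, and it is the part demanding the most care. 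Collecting everything gives $\Sigma_1=(p-9)^2-2p+J_p^2+\overline{J_p}^2$. (Alternatively, once $M=p^{2\alpha-2}M_1$ is known, one may simply invoke the Evans–Pulham–Sheehan formula quoted in the introduction, which gives $M_1=\tfrac{1}{32}\big((p-9)^2-4a^2\big)$ for $p=a^2+b^2$ with $a$ even, and then use the classical evaluation $J_p^2+\overline{J_p}^2=2p-4a^2$ to pass to the Jacobi-sum form.) Lifting back by the factor $p^{2\alpha-2}$ completes the proof.
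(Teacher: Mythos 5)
Your argument is correct, but it reaches the theorem by a genuinely different route than the paper. The paper never passes to the prime case: it re-runs the whole Evans--Pulham--Sheehan computation directly over $\mathbb{Z}_{p^\alpha}$, developing character-sum lemmas with $p^{\alpha-1}$ correction terms (Lemmas \ref{4}, \ref{5}, \ref{6}), an eight-set decomposition of the pair count with bijective relations among the $\beta_i$'s, the splitting $S=-4p^{2\alpha-2}+I+2J+K$, and a standalone Lemma \ref{10} evaluating $K$ as $J(\psi,\chi)^2+\overline{J(\psi,\chi)}^2$ via Berndt--Evans. Your key move --- that $\chi$, $\psi$ and the principal character are trivial on the kernel $1+p\mathbb{Z}_{p^\alpha}$ of reduction modulo $p$ (the Sylow $p$-subgroup of $\mathbb{Z}_{p^\alpha}^{\ast}$), so the entire clique-counting sum and the Jacobi sum scale by $p^{2\alpha-2}$ from their mod-$p$ counterparts --- collapses the theorem to the case $\alpha=1$ in one stroke; the paper exploits this periodicity only locally (Lemma \ref{3}, and the equivalence-classes-mod-$p$ step inside the proof of Lemma \ref{10}), never globally. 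Your normalization via ordered $4$-tuples and the simply transitive affine action on ordered edges is tidier bookkeeping than the paper's two applications of vertex-transitivity, but it is equivalent (your $M$ equals $2f(p^\alpha)$ in the paper's notation). What you gain is brevity and a structural explanation of why the answer is exactly $p^{2\alpha-2}$ times the prime-case quantity; what you give up is self-containedness. Note that your first way of finishing over $\mathbb{F}_p$ is only a sketch whose ``collecting everything'' step is precisely the hard EPS computation, so the complete version of your proof is the second route, which cites the EPS formula together with the classical evaluation $J_p^2+\overline{J_p}^2=2p-4a^2$ (where $p=a^2+b^2$ with $a$ even; equivalently, $J(\psi,\chi)$ has odd real part). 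That citation is legitimate --- EPS is independent prior work, and this conversion is exactly what the paper's remark after Theorem \ref{t2} invokes in reverse --- but the paper's longer development re-proves the EPS result rather than assuming it.
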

\begin{remark}
	 If we take $\alpha=1$, then we can further simplify
	the Jacobi sum appearing in Theorem \ref{t2} and obtain the result proved in \cite{evans1981number}.
\end{remark}
\section{Preliminaries}
We first find out the values of $n$ for which the graph $G_n$ is well-defined. We exclude the cases $n=1,2$.
\begin{proposition}\label{prop1}
	Let $n\textgreater 2$ be an integer. There exists $x\in \mathbb{Z}_n^{\ast}$ such that $x^2\equiv -1 \pmod{n}$ if and only if $n=2^s p_{1}^{\alpha_{1}}\cdots p_{k}^{\alpha_{k}}$ where the distinct primes $p_i\equiv 1\pmod{4}$ for all $i=1, \ldots, k$ and $s=0$ or $1$.
\end{proposition}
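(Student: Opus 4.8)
The plan is to reduce the congruence $x^2\equiv -1\pmod n$ to its prime-power constituents via the Chinese Remainder Theorem, and then settle each prime power separately using the structure of the unit group. First I would write $n=2^s\,p_1^{\alpha_1}\cdots p_k^{\alpha_k}$ with the $p_i$ distinct odd primes. By CRT there is a ring isomorphism $\mathbb{Z}_n\cong\mathbb{Z}_{2^s}\times\prod_i\mathbb{Z}_{p_i^{\alpha_i}}$, under which units correspond to tuples of units and squaring is applied componentwise. Hence a unit $x$ with $x^2\equiv -1\pmod n$ exists if and only if, for each prime power $q$ exactly dividing $n$, the congruence $y^2\equiv -1\pmod q$ has a solution with $y\in\mathbb{Z}_q^{\ast}$. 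So the whole statement follows once I determine, for each prime power $q=p^{\alpha}$, exactly when $-1$ is a square in $\mathbb{Z}_q^{\ast}$.

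For an odd prime $p$, I would use that $\mathbb{Z}_{p^{\alpha}}^{\ast}$ is cyclic of order $\varphi(p^{\alpha})=p^{\alpha-1}(p-1)$. In any cyclic group of even order $m$, the element $-1$ is the unique element of order $2$, and such an element is a square precisely when $4\mid m$: writing a generator $g$, the order-$2$ element is $g^{m/2}$, which lies in the index-$2$ subgroup of squares if and only if $m/2$ is even. Since $p^{\alpha-1}$ is odd, the condition $4\mid p^{\alpha-1}(p-1)$ reduces to $4\mid(p-1)$, i.e. $p\equiv 1\pmod 4$. Thus $-1$ is a square modulo $p^{\alpha}$ exactly when $p\equiv 1\pmod 4$, independently of the exponent $\alpha$.

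For the prime $2$ I would argue directly on the admissible exponents. When $s=0$ the factor is absent; when $s=1$ we have $-1\equiv 1=1^2\pmod 2$, so both $s=0$ and $s=1$ are allowed. When $s=2$, the units modulo $4$ are $\{1,3\}$ and both square to $1$, so $-1\equiv 3$ is not a square; and when $s\geq 3$, every odd square is $\equiv 1\pmod 8$, so $-1$ is again not a square. Hence the $2$-part of $n$ forces $s\in\{0,1\}$. Combining this with the odd-prime analysis, $x^2\equiv -1\pmod n$ is solvable in $\mathbb{Z}_n^{\ast}$ if and only if every odd prime divisor of $n$ is $\equiv 1\pmod 4$ and $s\leq 1$, which is exactly the stated form of $n$.

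I expect the only delicate points to be the two structural claims rather than any long computation: the assertion that the order-$2$ element of a cyclic group is a square iff the group order is divisible by $4$, and the elementary but case-dependent behaviour of squares modulo powers of $2$ (in particular that $\mathbb{Z}_{2^s}^{\ast}$ is no longer cyclic for $s\geq 3$, so the cyclic-group argument cannot be reused there). Keeping the $2$-adic bookkeeping clean, and making sure the CRT reduction is stated for units and not merely for ring elements, will be where I would be most careful.
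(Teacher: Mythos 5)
Your proof is correct and takes essentially the same route as the paper: a CRT reduction to prime-power moduli combined with the cyclicity of $\mathbb{Z}_{p^{\alpha}}^{\ast}$, where the paper constructs an element of order $4$ whose square is $-1$ and you phrase the identical fact as $g^{m/2}$ being a square iff $4\mid m$. The only difference is organizational: you run the cyclic-group argument in both directions and settle the $2$-part by cases, whereas the paper's converse uses the direct congruence obstructions mod $4$ and mod $p_i$ --- a distinction of presentation, not of substance.
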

\begin{proof}
	Let $n=2^s p_{1}^{\alpha_{1}} \cdots p_{k}^{\alpha_{k}}$ where $p_i\equiv 1\pmod{4}$ for all $i=1, \ldots , k$ and $s=0$ or $1$.
	Then $\mathbb{Z}_n^{\ast} \cong \mathbb{Z}_{p_1^{\alpha_1}}^{\ast} \times \cdots \times \mathbb{Z}_{p_k^{\alpha_k}}^{\ast}$ and each $\mathbb{Z}_{p_i^{\alpha_i}}^{\ast}$ is cyclic of order $p_i^{\alpha_i-1} \left( p_i-1\right)$.
	Let $a_i\pmod{p_i^{\alpha_i}}$ be an element of order  $4$ in $\mathbb{Z}_{p_i^{\alpha_i}}^{\ast}$. 
	Then $$\left(a_1 \pmod{p_1^{\alpha_1}}, \ldots, a_k \pmod{p_k^{\alpha_k}}\right)^2=\left( -1 \pmod{p_1^{\alpha_{1}}}, \ldots, -1 \pmod{p_k^{\alpha_{k}}}\right)$$ in $\mathbb{Z}_{p_1^{\alpha_1}}^{\ast} \times \cdots \times \mathbb{Z}_{p_k^{\alpha_k}}^{\ast}$. This gives an element $x$ in $\mathbb{Z}_n^{\ast}$ with the required property due to the isomorphism.
	\par Conversely, let there exist some $x \in \mathbb{Z}_n^{\ast}$ such that $x^2\equiv -1 \pmod{n}$.
	If $2^2$ divides $n$, then $x^2\equiv -1 \pmod{4}$ which is not possible. Therefore $n=2^s p_{1}^{\alpha_{1}} \cdots p_{k}^{\alpha_{k}}$ where $s=0$ or $1$, $k \geq 1$ and $p_i$'s are distinct primes other than $2$. Again, $x^2\equiv -1 \pmod{p_i}$ implies that $p_i \equiv 1 \pmod{4}$ for $i=1, 2, \ldots, k$.
\end{proof}
Our aim is to calculate the number of complete subgraphs in $G_n$, so we try to define an appropriate quadratic Dirichlet character analogous to the unique quadratic character on $\mathbb{F}_p^{\ast}$ ($p$ being a prime).
We begin with the definition of a Dirichlet character. Let $n\in\mathbb{Z}$. A function $\psi$  from $\mathbb {Z}$  to $\mathbb {C}$ is called a Dirichlet character modulo $n$ if it has the following properties:
\begin{enumerate}
	\item $\psi(1)=1$,
	\item $\psi(ab)=\psi(a)\psi(b)~~\forall a,b\in\mathbb{Z}$,
	\item $\psi(a)=\psi(b)$ if $a\equiv b\pmod{n}$,
	\item $\psi(a)=0$ if $\gcd(a,n)>1$.
\end{enumerate}
For $n\in\mathbb{N}$, let $\varepsilon$ be the trivial character mod $n$, which is defined as
\begin{align*}
\varepsilon(x)=
\begin{cases}
1, &\text{ if }\gcd(x,n)=1;\\
0, &\text{otherwise}.
\end{cases}
\end{align*}
Let $\psi$ be a character mod $n$. For $a\in \mathbb{Z}$, we define $\overline{\psi}(a):=\overline{\psi(a)}$ which becomes a Dirichlet character mod $n$. The set of Dirichlet characters mod $n$ forms a group under multiplication defined as $\psi\lambda(a):=\psi(a)\lambda(a)$ for  $a\in \mathbb{Z}$, where $\psi$ and $\lambda$ are two Dirichlet characters mod $n$. We also have $\psi \overline{\psi}=\varepsilon$ and $\sum\limits_{x\in\mathbb{Z}_n}\psi(x)=0$ if $\psi\neq\varepsilon$.
\par 
We are looking for a character $\psi$ such that
\begin{equation}\label{e1}
\psi(x)=1 \text{ if and only if } x\equiv a^2\pmod{n} \text{ for some } a\in\mathbb{Z}_n^{\ast}.
\end{equation}
We define three possible candidates for such a character. Let $a\in\mathbb{Z}_n^{\ast}.$
\begin{itemize}
	\item $\chi_1(a):= \left(\frac{a}{n}\right)$. This is the Jacobi symbol which is a Dirichlet character modulo $n$.
	\item For suitable $n$ when $a^{\frac{\phi(n)}{2}}\equiv \pm 1\pmod{n}$, we may define
	$\chi_2(a):=a^{\frac{\phi(n)}{2}}\pmod{n}$.
	\item	$\chi_3(a):=
	\begin{cases}
	1, &\text{ if }x^2\equiv a\pmod{n} \text{~has a solution};\\
	-1, &\text{otherwise}.
	\end{cases}$
\end{itemize}
\begin{proposition}
	Let	$k\geq 2$ be an integer. Let $n=2^s p_{1}^{\alpha_{1}} \cdots p_{k}^{\alpha_{k}}$ where $p_i$'s are distinct primes satisfying $p_i\equiv 1\pmod{4}$ for all $i=1, \ldots ,k$ and $s=0$ or $1$. Then, $\chi_1$ does not satisfy the condition (\ref{e1}).
\end{proposition}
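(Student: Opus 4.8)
The plan is to show that $\chi_1$ fails the reverse implication in condition (\ref{e1}): I will exhibit a unit $x\in\mathbb{Z}_n^{\ast}$ for which $\chi_1(x)=1$ even though $x$ is not congruent to the square of any unit modulo $n$. First I would record that the forward implication does hold, so that the failure is pinned entirely on the converse. If $x\equiv a^2\pmod{n}$ for some $a\in\mathbb{Z}_n^{\ast}$, then for each $i$ the Legendre symbol $\left(\frac{x}{p_i}\right)=\left(\frac{a^2}{p_i}\right)=1$, whence by multiplicativity of the Jacobi symbol $\chi_1(x)=\prod_{i}\left(\frac{x}{p_i}\right)^{\alpha_i}=1$. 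Thus any counterexample must come from the other direction.

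The main tool is the factorization $\chi_1(x)=\left(\frac{x}{n}\right)=\prod_{i=1}^{k}\left(\frac{x}{p_i}\right)^{\alpha_i}$, where the $2$-part, when $s=1$, is inert: since $\mathbb{Z}_2^{\ast}$ is trivial it contributes nothing to either the character values on units or to squareness, so it may be ignored. Combined with the isomorphism $\mathbb{Z}_n^{\ast}\cong\prod_i\mathbb{Z}_{p_i^{\alpha_i}}^{\ast}$, the key observations are: (a) a unit $x$ is the square of a unit modulo $n$ if and only if $\left(\frac{x}{p_i}\right)=1$ for every $i$ (squareness modulo an odd prime power reduces to squareness modulo the prime); and (b) the value of $\chi_1(x)$ depends only on the sign pattern $\epsilon_i:=\left(\frac{x}{p_i}\right)\in\{\pm 1\}$ through $\prod_i\epsilon_i^{\alpha_i}$, so that an even exponent $\alpha_i$ makes the $i$-th factor disappear. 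My goal is therefore reduced to a purely combinatorial one: find a sign pattern $(\epsilon_1,\ldots,\epsilon_k)\neq(1,\ldots,1)$ with $\prod_i\epsilon_i^{\alpha_i}=1$, and then realize it by an actual unit via the Chinese Remainder Theorem, which is possible because each $\mathbb{Z}_{p_i}^{\ast}$ contains non-residues as $p_i\geq 5$.

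To produce such a pattern I would split into two cases according to the parities of the exponents. If some $\alpha_j$ is even, take $\epsilon_j=-1$ and $\epsilon_i=1$ otherwise: then $\prod_i\epsilon_i^{\alpha_i}=(-1)^{\alpha_j}=1$, while $x$ fails to be a square modulo $p_j^{\alpha_j}$. If instead every $\alpha_i$ is odd, take $\epsilon_1=\epsilon_2=-1$ and the rest $+1$: then $\prod_i\epsilon_i^{\alpha_i}=(-1)^{\alpha_1}(-1)^{\alpha_2}=1$, while $x$ is not a square modulo $p_1^{\alpha_1}$. In either case the resulting $x$ satisfies $\chi_1(x)=1$ without being the square of a unit, contradicting (\ref{e1}).

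I expect the main (and essentially only) obstacle to be the bookkeeping forced by the exponents $\alpha_i$: because the Jacobi symbol only sees $\left(\frac{x}{p_i}\right)^{\alpha_i}$, the naive single-non-residue counterexample breaks when that exponent is odd, which is exactly why the all-odd case must flip two coordinates at once. This is precisely where the hypothesis $k\geq 2$ is indispensable, since two distinct prime factors are needed to arrange $\epsilon_1=\epsilon_2=-1$. Indeed, when $k=1$ and $\alpha_1$ is odd, $\chi_1$ reduces to the genuine quadratic character on units and \emph{does} satisfy (\ref{e1}), so the exclusion of $k=1$ in the statement is necessary.
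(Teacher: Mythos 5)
Your proof is correct and follows essentially the same route as the paper's: factor the Jacobi symbol as $\prod_i \left(\frac{x}{p_i}\right)^{\alpha_i}$, split into the case where some $\alpha_i$ is even (flip one sign) and the case where all $\alpha_i$ are odd (flip two signs, which is where $k\geq 2$ enters), and realize the sign pattern by the Chinese Remainder Theorem. The only difference is that you make explicit two points the paper leaves implicit — that squareness of a unit modulo $n$ is equivalent to having Legendre symbol $1$ at every $p_i$, and that the forward implication of (\ref{e1}) holds so the counterexample must violate the converse — which is a welcome clarification but not a different argument.
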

\begin{proof}
	Here, $n=2^s p_{1}^{\alpha_{1}} \cdots p_{k}^{\alpha_{k}}$ where $k\geq 2$, $p_i\equiv 1\pmod{4}$ for all $i=1, 2, \ldots, k$ and $s=0$ or $1$.
	Then for $a\in\mathbb{Z}_n^{\ast},~\chi_1(a)=\left( \dfrac{a}{n}\right) =\left( \dfrac{a}{p_1}\right)^{\alpha_1}\left( \dfrac{a}{p_2}\right)^{\alpha_2}\cdots \left( \dfrac{a}{p_k}\right)^{\alpha_k}$ where $\left(\dfrac{.}{p}\right) $ is the Legendre symbol corresponding to the prime $p$. Let $R(p)$ and $NR(p)$ denote the set of quadratic residues modulo $p$ and the set of quadratic non-residues modulo $p$ respectively. For $1\leq i\leq k,$ let $\alpha_i=2\beta_i+\gamma_i,$ where $0\leq \gamma_i\leq 1.$
	It is enough to find $a\in\mathbb{Z}_n^{\ast}$ such that $\left(\dfrac{a}{p_i}\right)=-1$ for some $i\in \left\lbrace 1,2, \ldots, k \right\rbrace $ but ${\left( \dfrac{a}{p_1}\right)}^{\gamma_1}{\left( \dfrac{a}{p_2}\right)}^{\gamma_2}\cdots {\left( \dfrac{a}{p_k}\right)}^{\gamma_k}=1$.
	If some $\alpha_i$ is even then then we choose $b\in NR(p_i)$; then the system of equations $$x\equiv b\pmod{p_i} \text{ and }x\equiv 1\pmod{p_j}~ \forall j=1,\ldots ,k, j\neq i$$ has a solution which gives the desired $a\in\mathbb{Z}_n^{\ast}$. So let us assume that all $\alpha_i$'s are odd. Then $$\chi_1(a)=\left( \dfrac{a}{n}\right) =\left( \dfrac{a}{p_1}\right)\left( \dfrac{a}{p_2}\right)\cdots \left( \dfrac{a}{p_k}\right).$$ 
	 We choose $b\in NR(p_1)$ and $c\in NR(p_2)$, then the system of equations
	$$x\equiv b\pmod{p_1},~x\equiv c\pmod{p_2}\text{ and } x\equiv 1\pmod{p_j}~\forall j\neq 1,2$$ has a solution which gives the desired $a\in\mathbb{Z}_n^{\ast}$.
\end{proof}
\begin{proposition}\label{prop2.3}
	Let $n$ be a positive integer. Then, $\chi_2=\varepsilon$ whenever $\mathbb{Z}_n^{\ast}$ is not cyclic and consequently does not satisfy the condition \eqref{e1}.
\end{proposition}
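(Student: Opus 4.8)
The goal is to show that when $\mathbb{Z}_n^{\ast}$ is not cyclic, one has $a^{\phi(n)/2}\equiv 1\pmod{n}$ for every $a\in\mathbb{Z}_n^{\ast}$. Granting this, $\chi_2(a)=1=\varepsilon(a)$ on every unit while both functions vanish off the units, so $\chi_2=\varepsilon$. The plan is therefore to control the exponent of the group $\mathbb{Z}_n^{\ast}$, that is, the least positive integer $N$ with $a^{N}\equiv 1\pmod n$ for all units $a$, and to prove that $N\mid \phi(n)/2$ precisely when $\mathbb{Z}_n^{\ast}$ fails to be cyclic. Indeed $a^{\phi(n)/2}\equiv 1$ holds for all $a$ if and only if every order divides $\phi(n)/2$, i.e. if and only if $N\mid \phi(n)/2$.

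First I would reduce the divisibility $N\mid\phi(n)/2$ to a statement about $2$-adic valuations. Since $N$ is the order of $\mathbb{Z}_n^{\ast}$ exactly when the group is cyclic, and since $N\mid\phi(n)$ always holds, the odd part of $N$ automatically divides the odd part of $\phi(n)/2$. Hence the only point to verify is that the $2$-part of $N$ is strictly smaller than the $2$-part of $\phi(n)$, equivalently that the Sylow $2$-subgroup $P$ of $\mathbb{Z}_n^{\ast}$ is not cyclic; here one uses that a finite abelian $2$-group has exponent equal to its order precisely when it is cyclic, so $P$ non-cyclic is the same as $\exp(P)\le |P|/2$.

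The heart of the matter, and the main obstacle, is thus to prove the implication: if $\mathbb{Z}_n^{\ast}$ is not cyclic, then its Sylow $2$-subgroup is not cyclic. Writing $n=2^{e_0}p_1^{e_1}\cdots p_k^{e_k}$, I would invoke the Chinese Remainder decomposition $\mathbb{Z}_n^{\ast}\cong\mathbb{Z}_{2^{e_0}}^{\ast}\times\prod_{i=1}^{k}\mathbb{Z}_{p_i^{e_i}}^{\ast}$ and count the solutions of $x^2\equiv 1\pmod n$, whose total number equals $|P|/|P^2|$ and so detects the rank of $P$. Each factor $\mathbb{Z}_{p_i^{e_i}}^{\ast}$ is cyclic of even order and contributes exactly one independent element of order $2$, while $\mathbb{Z}_{2^{e_0}}^{\ast}$ contributes $0,1$ or $2$ such elements according as $e_0\le 1$, $e_0=2$, or $e_0\ge 3$. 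Running through the cases that make $\mathbb{Z}_n^{\ast}$ non-cyclic, namely $k\ge 2$, or $k=1$ with $e_0\ge 2$, or $k=0$ with $e_0\ge 3$, one finds in each case at least two independent elements of order $2$, so $\#\{x:x^2\equiv 1\pmod n\}\ge 4$ and $P$ has rank at least $2$, hence is non-cyclic. This gives $\exp(P)\le|P|/2$, whence $N\mid\phi(n)/2$ and $\chi_2=\varepsilon$.

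Finally, for the ``consequently'' clause I would observe that the very same count shows the set of squares of units, which is the image of the squaring homomorphism on $\mathbb{Z}_n^{\ast}$, is a subgroup of index $\#\{x:x^2\equiv 1\pmod n\}\ge 2$. Thus there exists a unit $x$ not congruent to any $a^2$ with $a\in\mathbb{Z}_n^{\ast}$, and for this $x$ we have $\chi_2(x)=\varepsilon(x)=1$ even though $x$ is not a square of a unit. This contradicts the ``only if'' direction of condition \eqref{e1}, so $\chi_2$ does not satisfy \eqref{e1}.
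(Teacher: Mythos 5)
Your proof is correct, and it reaches the key fact --- that the exponent of $\mathbb{Z}_n^{\ast}$ divides $\phi(n)/2$ when the group is non-cyclic --- by a genuinely different route from the paper. The paper first checks that $\chi_2$ is a character (its values are square roots of $1$), then runs a case analysis over the four shapes of non-cyclic $n$ (namely $2^{\alpha}$ with $\alpha\geq 3$; $2^{\alpha}p_1^{\alpha_1}\cdots p_k^{\alpha_k}$ with $\alpha\geq 2$; $2p_1^{\alpha_1}\cdots p_k^{\alpha_k}$ with $k\geq 2$; and $p_1^{\alpha_1}\cdots p_k^{\alpha_k}$ with $k\geq 2$), bounding the maximal order of an element in each case by an explicit lcm estimate: since every $p_i^{\alpha_i-1}(p_i-1)$ is even, the lcm of the orders of the cyclic CRT factors absorbs at least one factor of $2$ and is at most $\phi(n)/2$. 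You instead isolate the $2$-primary part: you reduce $N\mid\phi(n)/2$ to non-cyclicity of the Sylow $2$-subgroup $P$, detect that by counting solutions of $x^2\equiv 1\pmod{n}$ (each odd prime-power factor contributes one involution, and $\mathbb{Z}_{2^{e_0}}^{\ast}$ contributes $0$, $1$ or $2$ independent ones), and invoke the standard fact that a non-cyclic abelian $2$-group has exponent at most half its order. The underlying arithmetic is the same --- the factor of $2$ shared among the even orders of the CRT factors --- but your packaging is more structural and uniform: one counting argument covers all four of the paper's cases at once, at the cost of invoking a bit more group theory, whereas the paper's version stays entirely elementary. A further point in your favor: you actually prove the ``consequently'' clause, observing that the squares of units form a subgroup of index $\#\{x : x^2\equiv 1 \pmod{n}\}\geq 4$, so a non-square unit $x$ exists with $\chi_2(x)=1$, violating the ``only if'' direction of \eqref{e1}; the paper leaves this last step implicit.
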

\begin{proof}
	First, we note that $\chi_2$ is a character on $\mathbb{Z}_n^{\ast}$. This is because $\left( a^{\frac{\phi(n)}{2}}\right)^2=1$ for all $a \in \mathbb{Z}_n^{\ast}$, so $a^{\frac{\phi(n)}{2}}$ can be one of the $t$ elements modulo $n$ whose square is 1, say $h_1, h_2, \ldots, h_t$. Let $\chi_2(a)=h_i, \chi_2(b)=h_j.$ Then $\chi_2(ab)=h_i h_j$, so $\chi_2$ is a character on $\mathbb{Z}_n^{\ast}$.\\
	If $\mathbb{Z}_n^{\ast}$ is not cyclic, then $n$ can be one of the following forms:
	\begin{enumerate}
		\item $n=2^\alpha (\alpha\geq 3)$: In this case, $\mathbb{Z}_{2^\alpha}^{\ast}$ has order $2^{\alpha-1}$. But it is not cyclic, so any element raised to the power $2^{\alpha-2}$ is 1. Hence, $\chi_2=\varepsilon$.
		\item $n=2^\alpha p_1^{\alpha_1} p_2^{\alpha_2}\cdots p_k^{\alpha_k} (\alpha\geq 2, k\geq 1, \alpha_i\geq 1 \forall i, p_i$'s are distinct odd primes$)$: We first consider that $\alpha=2$. We know that $\mathbb{Z}_{2^2}^{\ast}$ is cyclic of order 2. Let $x, a_1, a_2, \ldots, a_k$ be generators of $\mathbb{Z}_{2^2}^{\ast}, \mathbb{Z}_{p_1^{\alpha_1}}^{\ast}, \mathbb{Z}_{p_2^{\alpha_2}}^{\ast}, \ldots, \mathbb{Z}_{p_k^{\alpha_k}}^{\ast}$ respectively. Then, the order of the element $(x, a_1, \ldots, a_k)$ is equal to 
		\begin{align*}&\text{lcm}\left\lbrace 2,p_1^{\alpha_1-1}(p_1-1), \ldots, p_k^{\alpha_k-1}(p_k-1) \right\rbrace \\
		&\leq p_1^{\alpha_1-1}(p_1-1) \cdots p_k^{\alpha_k-1}(p_k-1)=\frac{\phi(n)}{2}.
		\end{align*}
		So $a^{\frac{\phi(n)}{2}}=1$ for all $a\in \mathbb{Z}_n^{\ast}$. Hence, $\chi_2=\varepsilon$. If $\alpha\geq 3$, then $\mathbb{Z}_{2^\alpha}^{\ast}$ is not cyclic, and hence $\frac{2^{\alpha-1}}{2}$ is the maximum order of an element. Therefore, the order of an element in $\mathbb{Z}_{2^\alpha}^{\ast}\times \mathbb{Z}_{p_1^{\alpha_1}}^{\ast}\times \cdots \times \mathbb{Z}_{p_k^{\alpha_k}}^{\ast}$  is atmost $2^{\alpha-2} p_1^{\alpha_1-1}(p_1-1)\cdots p_k^{\alpha_k-1}(p_k-1)$ which is equal to $\frac{\phi(n)}{2}$. This gives $\chi_2=\varepsilon$.
		\item $n=2 p_1^{\alpha_1} p_2^{\alpha_2}\cdots p_k^{\alpha_k} (k\geq 2, \alpha_i\geq 1 \forall i, p_i$'s are distinct odd primes$)$: Following similarly as shown in the previous cases,  we find that $\chi_2=\varepsilon$.
		\item $n= p_1^{\alpha_1} p_2^{\alpha_2}\cdots p_k^{\alpha_k} (k\geq 2, \alpha_i\geq 1 \forall i, p_i$'s are distinct odd primes$)$: In this case also, it follows that $\chi_2=\varepsilon$, and the proof goes along similar lines.
	\end{enumerate}
This completes the proof of the proposition.
\end{proof}
\begin{proposition}\label{prop2.5}
	If $\mathbb{Z}_n^{\ast}$ is not cyclic, then there exist $a,b \in \mathbb{Z}_n^{\ast}$ such that $a,b,ab$ are all non-squares.
\end{proposition}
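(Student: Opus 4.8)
The plan is to pass to the quotient of $\mathbb{Z}_n^{\ast}$ by its subgroup of squares and reduce the statement to a counting fact about elementary abelian $2$-groups. Write $H:=\{x^2:x\in\mathbb{Z}_n^{\ast}\}$ for the subgroup of squares and set $Q:=\mathbb{Z}_n^{\ast}/H$. Since every element of $Q$ has order dividing $2$, the group $Q$ is an elementary abelian $2$-group, say $Q\cong(\mathbb{Z}/2\mathbb{Z})^r$, and a unit $a$ is a non-square exactly when its image $\overline{a}\in Q$ is nontrivial. Finding $a,b\in\mathbb{Z}_n^{\ast}$ with $a,b,ab$ all non-squares therefore amounts to finding $\overline{a},\overline{b}\in Q$ with $\overline{a},\overline{b},\overline{a}\,\overline{b}$ all nontrivial. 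As $Q$ has exponent $2$, the product $\overline{a}\,\overline{b}$ is trivial if and only if $\overline{a}=\overline{b}$, so the task is precisely to exhibit two \emph{distinct} nontrivial elements of $Q$. Such a pair exists if and only if $|Q|=2^r\geq 4$, that is, if and only if $r\geq 2$.

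It then remains to prove that $\mathbb{Z}_n^{\ast}$ being non-cyclic forces $r\geq 2$, where $r=\dim_{\mathbb{F}_2}\!\big(\mathbb{Z}_n^{\ast}/(\mathbb{Z}_n^{\ast})^2\big)$. This quantity is additive over the direct product decomposition $\mathbb{Z}_n^{\ast}\cong\mathbb{Z}_{2^s}^{\ast}\times\prod_{i}\mathbb{Z}_{p_i^{\alpha_i}}^{\ast}$ furnished by the Chinese Remainder Theorem, so I would compute it factor by factor. Each odd factor $\mathbb{Z}_{p_i^{\alpha_i}}^{\ast}$ is cyclic of \emph{even} order $p_i^{\alpha_i-1}(p_i-1)$, hence contributes exactly $1$ to $r$; and the $2$-part contributes $0$ when $s\in\{0,1\}$, $1$ when $s=2$, and $2$ when $s\geq 3$ (using $\mathbb{Z}_{2^s}^{\ast}\cong\mathbb{Z}/2\mathbb{Z}\times\mathbb{Z}/2^{s-2}\mathbb{Z}$ for $s\geq 3$).

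Finally I would combine this with the classical description of those $n$ for which $\mathbb{Z}_n^{\ast}$ is cyclic, namely $n\in\{1,2,4,p^{\alpha},2p^{\alpha}\}$. If $\mathbb{Z}_n^{\ast}$ is not cyclic, then $n$ falls into one of three cases: it has at least two distinct odd prime divisors, whence the odd factors alone give $r\geq 2$; or it has exactly one odd prime divisor together with $4\mid n$, whence $r\geq 1+1=2$; or $n=2^s$ with $s\geq 3$, whence $r=2$. In every case $r\geq 2$, which by the first paragraph produces the required $a,b,ab$.

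I expect the main subtlety to lie in this last reduction rather than in any computation. The implication ``non-cyclic $\Rightarrow r\geq 2$'' is \emph{false} for general finite abelian groups (for instance $\mathbb{Z}/3\mathbb{Z}\times\mathbb{Z}/3\mathbb{Z}$ has $r=0$ yet is not cyclic), and it holds here only because every cyclic factor $\mathbb{Z}_{p_i^{\alpha_i}}^{\ast}$ of a unit group has even order and each $\mathbb{Z}_{2^s}^{\ast}$ with $s\geq 3$ is already non-cyclic. Keeping track of the $2$-part and verifying that non-cyclicity always supplies at least two factors contributing to the $2$-rank is the one place where a short case analysis seems unavoidable.
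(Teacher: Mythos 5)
Your proof is correct, and it takes a genuinely different route from the paper's. You pass to the quotient $Q=\mathbb{Z}_n^{\ast}/(\mathbb{Z}_n^{\ast})^2$, observe that the desired pair exists precisely when this elementary abelian $2$-group has rank $r\geq 2$ (two distinct nontrivial classes), and then compute $r$ additively over the CRT factors, invoking the classification of $n$ with cyclic unit group. The paper instead splits into two cases: for $n=2^{\alpha}$ with $\alpha\geq 3$ it exhibits the explicit witnesses $a=3$, $b=5$, $ab=15$, citing Ireland--Rosen for the structure of $\mathbb{Z}_{2^{\alpha}}^{\ast}$; and when $n$ has at least two distinct prime factors it argues by contradiction, fixing a non-square $x$ and noting that if $xy$ were a square for every non-square $y$, then the injection $y\mapsto xy$ of non-squares into squares would force $\phi(n)\leq 2|R|$, which is refuted by computing the index of the subgroup $R$ of squares to be $2^{j}$ with $j\geq 2$ in each case. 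The underlying arithmetic is identical---the paper's index computations are exactly your factor-by-factor rank count---but your framing is uniform (no separate treatment of $n=2^{\alpha}$ by explicit elements), constructive rather than by contradiction, and isolates the sharp criterion: such a pair exists if and only if $r\geq 2$, which in particular shows the hypothesis of the proposition cannot be weakened. What the paper's version buys is concreteness: explicit non-squares in the $2$-power case and an argument that stays at the level of counting residues without forming the quotient group; what yours buys is brevity, a cleaner statement of where non-cyclicity actually enters (as you note, the implication fails for general abelian groups and holds here only because every odd CRT factor has even order), and the converse for free.
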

\begin{proof}
	 Since $\mathbb{Z}_n^{\ast}$ is not cyclic, so $n$ can be one of the following forms:
	\begin{enumerate}
		\item $n$ is a power of a prime: In this case, we have $n=2^{\alpha}$ where $\alpha\geq 3$. Let $a=3$ and $b=5$. From \cite[Proposition 4.2.2, p. 45]{ireland1990classical} we find that $a$ and $ab$ are non-squares in $\mathbb{Z}^{\ast}_{2^{\alpha}}$. Again, from \cite[Theorem $2'$, p. 43]{ireland1990classical} we see that $\mathbb{Z}^{\ast}_{2^{\alpha}}=\left\lbrace (-1)^x 5^y\mid x=0, 1 \text{ and } 0\leq y\leq 2^{\alpha-2}  \right\rbrace $ which implies that $5$ cannot be a square in $\mathbb{Z}^{\ast}_{2^{\alpha}}$. Consequently, $a=3$, $b=5$ and $ab=15$ are all non-squares in $\mathbb{Z}^{\ast}_{2^{\alpha}}$.
		\item $n$ is divided by atleast two distinct primes: Let $R$ and $NR$ be the subsets of $\mathbb{Z}_n^{\ast}$ of squares and non-squares, respectively. Let $x\in NR$. It is enough to show that $\exists$  $y\in NR$ such that $xy\in NR$. Suppose that there does not exist any such $y$. Then, $\{xy: y\in NR\}\subseteq R$ which yields $\phi(n)\leq 2|R|$. In each of the cases below, we prove that $\phi(n)\nleq 2|R|$.
		\begin{itemize}
			\item $2$ is a factor of $n$: We have the following two cases.
			\begin{enumerate}
				\item[(i)] $n=2^{\alpha} p_1^{\alpha_1}\cdots p_k^{\alpha_k}, k\geq 1, \alpha\geq 2$, $p_i$'s are distinct odd primes: In this case, $$|R|=\dfrac{\phi(n)}{2\times h \times 2^{k}},$$ where $h=1$ or $2$. Hence, $\phi(n)\nleq 2|R|$.
				\item[(ii)] $n=2p_1^{\alpha_1}p_2^{\alpha_2}\cdots p_k^{\alpha_k}, k\geq 2$, $p_i$'s are distinct odd primes: In this case $|R|=\dfrac{\phi(n)}{2^{k}}$, and hence $\phi(n)\nleq 2|R|$.
			\end{enumerate}
			\item $2$ is not a factor of $n$: In this case, we have $n=p_1^{\alpha_1}p_2^{\alpha_2}\cdots p_k^{\alpha_k}$, where $k\geq 2$ and $p_i$'s are distinct odd primes. We find that  $|R|=\dfrac{\phi(n)}{2^k}$, and hence $\phi(n)\nleq 2|R|$.
		\end{itemize} 
	\end{enumerate}
This completes the proof of the proposition.
	\end{proof}
\begin{corollary}
From Proposition \ref{prop2.5}, it follows that $\chi_3$ cannot be a character unless $\mathbb{Z}_n^{\ast}$ is cyclic.	
\end{corollary}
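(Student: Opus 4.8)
The plan is to derive a direct contradiction with multiplicativity. Recall that, by definition, $\chi_3(a)=1$ exactly when $a\in\mathbb{Z}_n^{\ast}$ is a square modulo $n$ and $\chi_3(a)=-1$ otherwise. If $\chi_3$ were a Dirichlet character, then in particular it would have to be completely multiplicative on units, i.e.\ $\chi_3(ab)=\chi_3(a)\chi_3(b)$ for every $a,b\in\mathbb{Z}_n^{\ast}$. So the strategy is simply to exhibit a pair of units on which this identity fails whenever $\mathbb{Z}_n^{\ast}$ is not cyclic.

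First I would suppose that $\mathbb{Z}_n^{\ast}$ is not cyclic and apply Proposition \ref{prop2.5} to obtain units $a,b\in\mathbb{Z}_n^{\ast}$ for which $a$, $b$ and $ab$ are simultaneously non-squares. Reading off the values of $\chi_3$ then gives $\chi_3(a)=\chi_3(b)=-1$, so that $\chi_3(a)\chi_3(b)=1$, whereas $\chi_3(ab)=-1$ because $ab$ is itself a non-square. Hence $\chi_3(ab)\neq\chi_3(a)\chi_3(b)$, the multiplicativity required of a character fails, and $\chi_3$ cannot be a Dirichlet character; equivalently, $\chi_3$ can be a character only when $\mathbb{Z}_n^{\ast}$ is cyclic.

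There is essentially no obstacle remaining at this stage, since the entire content has already been packaged into Proposition \ref{prop2.5}: the only thing one needs is a single ``bad triple'' of non-squares whose product is again a non-square, and the proposition manufactures exactly such a triple for every non-cyclic modulus. Thus the corollary follows immediately from the failure of multiplicativity, and no further case analysis on the shape of $n$ is required here.
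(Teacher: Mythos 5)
Your proof is correct and is exactly the argument the paper intends: the corollary is stated as an immediate consequence of Proposition \ref{prop2.5}, and the only step to supply is the one you give, namely that $a$, $b$, $ab$ all non-squares forces $\chi_3(ab)=-1\neq 1=\chi_3(a)\chi_3(b)$, violating multiplicativity. Nothing further is needed.
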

The above propositions suggest to take $\chi_3$ as the desired quadratic character and consequently take $n$ such that $\mathbb{Z}_n^{\ast}$ is cyclic, in order that $\chi_3$ be defined. Note that $\chi_2=\chi_3$ when $\mathbb{Z}_n^{\ast}$ is cyclic.
\par Now we evaluate some character sums which shall be required later on to prove the main results. If $n=p^\alpha$ where the prime $p$ satisfies $p\equiv 1\pmod{4}$, we denote by $\chi$ the unique character mod $n$ of order $2$. It is easy to see that $\chi(-1)=1$ so an edge in the graph $G_n$ is well-defined.
\begin{lemma}\label{1}
We have	${\phi(n)/2\choose i} p^i\equiv 0\pmod{p^\alpha}$ where $n=p^\alpha,1\leq i\leq {\alpha-1}$. 
\end{lemma}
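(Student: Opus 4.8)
The plan is to pass to $p$-adic valuations. Write $v_p$ for the $p$-adic valuation and set $m := \phi(n)/2 = p^{\alpha-1}(p-1)/2$. The first observation I would record is that, since $(p-1)/2 < p$ is coprime to $p$, we have $v_p(m) = \alpha - 1$ exactly. The target divisibility $p^{\alpha}\mid \binom{m}{i}p^{i}$ is the same as $v_p\!\left(\binom{m}{i}p^{i}\right)\ge \alpha$, i.e. $v_p\!\left(\binom{m}{i}\right)\ge \alpha - i$, so everything reduces to bounding the valuation of the binomial coefficient from below.

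For that bound I would use the elementary identity $i\binom{m}{i} = m\binom{m-1}{i-1}$. Taking $v_p$ of both sides gives $v_p\!\left(\binom{m}{i}\right) = v_p(m) + v_p\!\left(\binom{m-1}{i-1}\right) - v_p(i)$, and since a binomial coefficient is a nonnegative integer its valuation is $\ge 0$, whence $v_p\!\left(\binom{m}{i}\right)\ge v_p(m) - v_p(i) = (\alpha-1) - v_p(i)$. Combining this with the reduction above, it suffices to check that $(\alpha-1) - v_p(i)\ge \alpha - i$, that is, that $v_p(i)\le i-1$ for every $i$ in the range $1\le i\le \alpha-1$.

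This last inequality carries the only genuine content, and it is purely elementary: if $v_p(i)=k$ then $p^{k}\mid i$, so $i\ge p^{k}$, and since $p\ge 5$ (because $p\equiv 1\pmod 4$) we obtain $i-1\ge p^{k}-1\ge k = v_p(i)$, the middle inequality $p^{k}-1\ge k$ being valid for all $k\ge 0$. Feeding this back gives $v_p\!\left(\binom{m}{i}\right)+i \ge (\alpha-1)-(i-1)+i = \alpha$, which is exactly the assertion.

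I do not expect a serious obstacle here; the lemma is essentially a valuation count, and the only place to be careful is the bookkeeping. Specifically, one must record that $v_p(m)=\alpha-1$ \emph{exactly} rather than merely $\ge \alpha-1$, and one must account for the powers of $p$ that the factorial $i!$ in the denominator of $\binom{m}{i}$ can cancel; the identity $i\binom{m}{i}=m\binom{m-1}{i-1}$ packages this cancellation cleanly so that no appeal to Legendre's or Kummer's formula is needed.
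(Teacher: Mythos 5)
Your proof is correct, and it reaches the decisive estimate $v_p\bigl(\tbinom{m}{i}\bigr)\ge \alpha-i$ by a genuinely different mechanism than the paper. The paper writes the binomial coefficient out as $\frac{m(m-1)\cdots(m-i+1)}{i!}$, pulls the factor $p^{\alpha-1}=p^{\alpha-i}p^{i-1}$ out of the leading term $m$, and then controls the denominator by proving $p^i\nmid i!$ via Legendre's formula $v_p(i!)=\frac{i-\sigma_p(i)}{p-1}$, where $\sigma_p$ is the base-$p$ digit sum; so the paper's key input is the factorial bound $v_p(i!)\le i-1$. You instead invoke the absorption identity $i\binom{m}{i}=m\binom{m-1}{i-1}$, which reduces the needed denominator control to the single factor $i$: you only need $v_p(i)\le i-1$, which follows from the trivial inequality $p^{v_p(i)}\le i$, and the cancellation of the rest of $i!$ is delegated to the nonnegativity of $v_p\bigl(\tbinom{m-1}{i-1}\bigr)$. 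Your route is more elementary (no Legendre or Kummer, no digit sums) and requires only the weaker bound on $v_p(i)$ rather than on $v_p(i!)$; the paper's route proves the stronger factorial bound, which is what one is forced to do when estimating numerator and denominator of $\binom{m}{i}$ separately. One cosmetic remark: your appeal to $p\ge 5$ is unnecessary, since $p^k-1\ge k$ already holds for every prime $p\ge 2$ and every integer $k\ge 0$.
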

\begin{proof}
	Enough to show $p^{\alpha-i}\mid {p^{\alpha-1}(\frac{p-1}{2})\choose i}$ for $1\leq i\leq {\alpha-1}$. We have
	\begin{align*}
	{p^{\alpha-1}(\frac{p-1}{2})\choose i}&=\dfrac{p^{\alpha-1}(\frac{p-1}{2})(p^{\alpha-1}(\frac{p-1}{2})-1)\cdots (p^{\alpha-1}(\frac{p-1}{2})-i+1)}{i!}\\
	&=\dfrac{p^{\alpha-i} p^{i-1}(\frac{p-1}{2})(p^{\alpha-1}(\frac{p-1}{2})-1)\cdots (p^{\alpha-1}(\frac{p-1}{2})-i+1)}{i!}.
	\end{align*}
	For an integer $x$, let  $v_p(x)$ be the highest power of $p$ dividing $x$ and $\sigma_p(x)$ be the sum of digits of base-$p$ representation of $x$.
	 By Legendre's formula, $v_p(i!)=\sum\limits_{k=1}^{\infty} \lfloor{\frac{i}{p^k}}\rfloor$ from which it follows that $v_p(i!)=\dfrac{i-\sigma_p(i)}{p-1}$.
	If possible let $p^i\mid i!$. Then, $v_p(i!)\geq i$, that is  $\dfrac{i-\sigma_p(i)}{p-1}\geq i$, which is not possible. This completes the proof of the lemma.
\end{proof}
\begin{lemma}\label{3}
Let $n=p^\alpha$, where $\alpha\geq 1$ and $p\equiv 1\pmod{4}$, and let $\chi$ be the unique character mod $n$ of order $2$. Let $x\in\mathbb{Z}_n$. Then we have	$\chi(x)=\chi(x+pk)$ for any integer $k$.
\end{lemma}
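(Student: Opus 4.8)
The plan is to show that $\chi$ depends only on the residue of its argument modulo $p$; since $x$ and $x+pk$ are congruent modulo $p$, the claimed equality follows at once. The cases split naturally. If $p\mid x$, then $p\mid(x+pk)$ as well, so both $\chi(x)$ and $\chi(x+pk)$ vanish by property (4) in the definition of a Dirichlet character, and there is nothing to prove. Thus the real content lies in the case $\gcd(x,p)=1$, where $x$ is a unit modulo $n=p^{\alpha}$.

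First I would isolate the subgroup $K=\{u\in\mathbb{Z}_n^{\ast}:u\equiv 1\pmod p\}$, which is precisely the kernel of the reduction homomorphism $\mathbb{Z}_n^{\ast}\to\mathbb{Z}_p^{\ast}$. Since $\mathbb{Z}_n^{\ast}$ is cyclic of order $\phi(p^{\alpha})=p^{\alpha-1}(p-1)$, a quick count of units in each residue class modulo $p$ (or the uniqueness of subgroups in a cyclic group) shows that $|K|=p^{\alpha-1}$, which is \emph{odd}. The key observation is that $\chi$ is trivial on $K$: because $\chi$ has order $2$ we have $\chi(u)\in\{\pm 1\}$ for every unit $u$, and if $u\in K$ has multiplicative order $m$ (necessarily odd, as $m\mid p^{\alpha-1}$), then by multiplicativity $\chi(u)^m=\chi(u^m)=\chi(1)=1$; since $\chi(u)=\pm 1$ and $m$ is odd, this forces $\chi(u)=\chi(u)^m=1$.

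With $\chi|_K$ trivial established, the conclusion is immediate. Given a unit $x$ (so $p\nmid x$), the integer $x+pk$ is also coprime to $p$, hence a unit modulo $n$, and $x^{-1}(x+pk)\equiv 1\pmod p$, so $x^{-1}(x+pk)\in K$. Multiplicativity then yields $\chi(x+pk)=\chi(x)\,\chi\bigl(x^{-1}(x+pk)\bigr)=\chi(x)\cdot 1=\chi(x)$, which together with the trivial vanishing case finishes the argument.

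The heart of the matter—and the only step requiring care—is the claim that $\chi$ kills $K$; this rests on the arithmetic coincidence that the ``$p$-part'' $K$ of the cyclic group $\mathbb{Z}_n^{\ast}$ has odd order while $\chi$ has order $2$, after which everything is bookkeeping. I would take care to justify $|K|=p^{\alpha-1}$ cleanly. An equivalent route, should one prefer it, is to invoke that a unit is a square modulo $p^{\alpha}$ if and only if it is a square modulo $p$ (Hensel lifting for odd $p$), which identifies $\chi$ with the lift of the Legendre symbol mod $p$ and makes the mod-$p$ dependence manifest; but the odd-order subgroup argument above is entirely self-contained and avoids appealing to square-lifting.
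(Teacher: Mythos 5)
Your proof is correct, but it takes a genuinely different route from the paper's. The paper proves the lemma computationally: it uses the identification $\chi(a)\equiv a^{\phi(n)/2}\pmod{n}$ (valid since $\mathbb{Z}_{p^\alpha}^{\ast}$ is cyclic, so $\chi_2=\chi_3$), expands $(x+pk)^{\phi(n)/2}$ by the binomial theorem, and kills every term containing a positive power of $p$ via its Lemma \ref{1}, which is a $p$-adic valuation estimate ${\phi(n)/2\choose i}p^i\equiv 0\pmod{p^\alpha}$ proved with Legendre's formula for $v_p(i!)$. You instead argue structurally: the kernel $K$ of reduction $\mathbb{Z}_{p^\alpha}^{\ast}\to\mathbb{Z}_p^{\ast}$ has order $p^{\alpha-1}$, which is odd, so every $u\in K$ has odd order $m$, and then $\chi(u)^m=\chi(u^m)=1$ together with $\chi(u)=\pm 1$ forces $\chi(u)=1$; multiplicativity and $x^{-1}(x+pk)\in K$ finish the unit case, while the non-unit case is trivial in both treatments. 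Your argument buys several things: it is self-contained (it bypasses Lemma \ref{1} and the power formula for $\chi$ entirely, needing only that $\chi$ is multiplicative, of order $2$, and $\pm 1$ on units), and it exposes the structural content of the lemma, namely that $\chi$ factors through reduction mod $p$ and is just the lift of the Legendre symbol. The paper's approach buys economy within its own architecture: Lemma \ref{1} is already on hand and gets reused (e.g., in the decomposition proposition for $G_n$ in Section 3), so the binomial computation is essentially free there, whereas your kernel argument, though cleaner in isolation, would not replace those other uses.
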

\begin{proof}
Let $x\in \mathbb{Z}_n$ and let $k$ be any integer. If $x\notin \mathbb{Z}_n^{\ast}$ then we have $\chi(x)=\chi(x+pk)=0$, and hence we obtain the required result. So, let $x\in \mathbb{Z}_n^{\ast}$. Then the result follows from the binomial expansion of $(x+p)^{\frac{\phi(n)}{2}}$ and Lemma \ref{1}.
\end{proof} 
\begin{lemma}\label{4} Let $n=p^\alpha$, where $\alpha\geq 1$ and $p\equiv 1\pmod{4}$. Let $\chi$ be the unique character mod $n$ of order $2$. Then, for $a\in\mathbb{Z}_n^{\ast}$, we have 
	$\sum\limits_{x\in \mathbb{Z}_n^{\ast}} \chi(x^2-a)= -(1+\chi(a))p^{\alpha-1}$.
\end{lemma}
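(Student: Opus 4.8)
The plan is to pull the sum down to $\mathbb{F}_p$ and then invoke the classical evaluation of a quadratic character sum. The key enabling fact is Lemma \ref{3}: since $\chi(x)=\chi(x+pk)$ for every integer $k$, the character $\chi$ is constant on residue classes modulo $p$, and in fact a unit of $\mathbb{Z}_{p^{\alpha}}$ is a square exactly when its reduction mod $p$ is a quadratic residue (the kernel of reduction $\mathbb{Z}_{p^\alpha}^{\ast}\to\mathbb{F}_p^{\ast}$ has odd order $p^{\alpha-1}$). Thus $\chi$ agrees with the Legendre symbol of the reduction, and consequently $\chi(x^2-a)$ depends only on $x\bmod p$ and $a\bmod p$: writing $\bar a:=a\bmod p$ and noting $x^2-a\equiv (x\bmod p)^2-\bar a\pmod p$, we get $\chi(x^2-a)=\left(\frac{(x^2-a)\bmod p}{p}\right)$, with both sides vanishing precisely when $p\mid x^2-a$.

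Next I would partition $\mathbb{Z}_n^{\ast}$ by residue class modulo $p$. Because $a$ is a unit we have $\bar a\neq 0$, and each nonzero residue $r\in\{1,\dots,p-1\}$ is the reduction of exactly $p^{\alpha-1}$ units of $\mathbb{Z}_{p^{\alpha}}$, on each of which $\chi(x^2-a)$ takes the constant value $\left(\frac{r^2-\bar a}{p}\right)$. Summing over the classes then yields
\begin{equation*}
\sum_{x\in\mathbb{Z}_n^{\ast}}\chi(x^2-a)=p^{\alpha-1}\sum_{r=1}^{p-1}\left(\frac{r^2-\bar a}{p}\right).
\end{equation*}
This reduction is the step that carries the weight of the lemma and the one where I expect the only genuine care to be needed, namely verifying simultaneously that the fibers of reduction have equal size $p^{\alpha-1}$ and that Lemma \ref{3} legitimately descends $\chi$ to $\mathbb{F}_p$; once this is in place the remainder is classical.

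Finally I would evaluate the sum over $\mathbb{F}_p^{\ast}$ by comparison with the sum over all of $\mathbb{F}_p$. The polynomial $r^2-\bar a$ has discriminant $4\bar a\neq 0$, so the standard quadratic character sum evaluation gives $\sum_{r\in\mathbb{F}_p}\left(\frac{r^2-\bar a}{p}\right)=-1$. Removing the $r=0$ term, which equals $\left(\frac{-\bar a}{p}\right)=\left(\frac{-1}{p}\right)\left(\frac{\bar a}{p}\right)=\chi(a)$ since $p\equiv 1\pmod 4$ forces $\left(\frac{-1}{p}\right)=1$, leaves $\sum_{r=1}^{p-1}\left(\frac{r^2-\bar a}{p}\right)=-1-\chi(a)$. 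Substituting back into the displayed identity produces $\sum_{x\in\mathbb{Z}_n^{\ast}}\chi(x^2-a)=-(1+\chi(a))p^{\alpha-1}$, which is the assertion.
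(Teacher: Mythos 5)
Your proof is correct, but it follows a genuinely different route from the paper's. The paper works entirely inside $\mathbb{Z}_{p^\alpha}$: it writes $\sum_{x\in\mathbb{Z}_n^{\ast}}\chi(x^2-a)$ as $\sum_{x\in\mathbb{Z}_n^{\ast}}\chi(x-a)(1+\chi(x))$, splits this into $\sum\chi(x-a)$ and $\sum\chi(x(x-a))$, and evaluates each piece by completing to a full sum (which vanishes) and subtracting the terms with $p\mid x$, together with an inversion substitution $x\mapsto x^{-1}$ --- in effect redoing, modulo $p^\alpha$, the classical manipulations one normally performs over $\mathbb{F}_p$. You instead isolate the structural fact that $\chi$ is the pullback of the Legendre symbol along the reduction map $\mathbb{Z}_{p^\alpha}^{\ast}\to\mathbb{F}_p^{\ast}$ (correctly justified by the odd order $p^{\alpha-1}$ of the kernel, or alternatively by Lemma \ref{3} plus nontriviality of the induced character), so that the sum collapses to $p^{\alpha-1}$ copies of the classical sum $\sum_{r\in\mathbb{F}_p^{\ast}}\left(\frac{r^2-\bar a}{p}\right)$; the complete sum over $\mathbb{F}_p$ is $-1$ since the discriminant $4\bar a$ is nonzero, and removing the $r=0$ term costs $\chi(a)$ because $\left(\frac{-1}{p}\right)=1$ for $p\equiv 1\pmod 4$. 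Your two supporting checks --- that each nonzero residue mod $p$ has exactly $p^{\alpha-1}$ unit lifts, and that $\chi(x^2-a)$ and the Legendre symbol vanish for exactly the same $x$ --- are both correct, so the reduction is airtight. As for what each approach buys: yours is conceptually cleaner, and the descent principle behind it would equally shortcut Lemmas \ref{5}, \ref{6} and \ref{9} of the paper, since any sum of $\chi$ over a union of complete residue systems mod $p^\alpha$ reduces to a Legendre-symbol sum times a power of $p$; the paper's version is self-contained, never invoking the classical $\mathbb{F}_p$ evaluation, whose textbook proof is essentially the same doubling-and-inversion computation that the paper carries out directly mod $p^\alpha$.
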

\begin{proof}
	For $x\in\mathbb{Z}_n^{\ast}$, let $x^{-1}$ denote the multiplicative inverse of $x$ in $\mathbb{Z}_n^{\ast}$. We have
	\begin{align}\label{e2}
	\sum\limits_{x\in \mathbb{Z}_n^{\ast}} \chi(x^2-a)&= 2 \sum\limits_{\substack{x\in \mathbb{Z}_n^{\ast}\\x \text{ is a square}}} \chi(x-a)\notag\\
	&=\sum\limits_{x\in \mathbb{Z}_n^{\ast}} \chi(x-a)(1+\chi(x))\notag\\
	&=\sum\limits_{x\in \mathbb{Z}_n^{\ast}} \chi(x-a)+\sum\limits_{x\in \mathbb{Z}_n^{\ast}} \chi( x(x-a)).
	\end{align}
	Now,
	\begin{align}\label{e3}
	\sum\limits_{x\in \mathbb{Z}_n^{\ast}} \chi(x-a)&=\sum\limits_{\substack{x\in \mathbb{Z}_n^{\ast}\\x-a\in \mathbb{Z}_n^{\ast}}} \chi(x-a)\notag\\
	&=\sum\limits_{x-a\in \mathbb{Z}_n^{\ast}} \chi(x-a)
	-\sum\limits_{\substack{x-a\in \mathbb{Z}_n^{\ast}\\x\notin \mathbb{Z}_n^{\ast}}} \chi(x-a)\notag\\
	&=-\sum\limits_{\substack{x-a\in \mathbb{Z}_n^{\ast}\\ p|x}}\chi(x-a) \notag\\
	&=-\chi(a) p^{\alpha-1}
	\end{align}
	and
	\begin{align}\label{e4}
	\sum\limits_{x\in\mathbb{Z}_n^{\ast}}\chi(x(x-a))&=\sum\limits_{x\in\mathbb{Z}_n^{\ast}}\chi(x(x-a)x^{-2})\notag\\
	&=\sum\limits_{x\in\mathbb{Z}_n^{\ast}}\chi(1-a x^{-1})\notag\\
	&=\sum\limits_{x\in \mathbb{Z}_n^{\ast}}\chi(1-a x) \notag\\
	&=\chi(a) \sum\limits_{x\in \mathbb{Z}_n^{\ast}}\chi(a^{-1}-x)\notag\\
	&=\chi(a) \left[ \sum\limits_{x\in \mathbb{Z}_n}\chi(a^{-1}-x)-\sum\limits_{\substack{x\in\mathbb{Z}_n\\p\mid x}}\chi(a^{-1}-x)\right] \notag\\
	&=-p^{\alpha-1}.
	\end{align}
Combining \eqref{e2}, \eqref{e3} and \eqref{e4} we obtain the required result. 
\end{proof}
\begin{lemma}\label{5} Let $n=p^\alpha$, where $\alpha\geq 1$ and $p\equiv 1\pmod{4}$. Let $\chi$ be the unique character mod $n$ of order $2$. We have 
	$$|\{x\in \mathbb{Z}_{p^\alpha}: p\nmid x,1-x^2; \chi(1-x^2)=1 \}| =\dfrac{p^{\alpha-1}(p-5)}{2}.$$
\end{lemma}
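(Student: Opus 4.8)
The plan is to detect the square condition with the standard indicator $\tfrac{1}{2}(1+\chi(y))$, which equals $1$ when $y$ is a unit square and $0$ when $y$ is a non-square unit. Writing $n=p^{\alpha}$ and letting $A$ denote the set whose cardinality we seek, every $x\in A$ satisfies $x\in\mathbb{Z}_n^{\ast}$ and $1-x^2\in\mathbb{Z}_n^{\ast}$, so
\begin{align*}
|A|=\sum_{\substack{x\in\mathbb{Z}_n^{\ast}\\ 1-x^2\in\mathbb{Z}_n^{\ast}}}\frac{1+\chi(1-x^2)}{2}=\frac{1}{2}\left(N+S\right),
\end{align*}
where $N=|\{x\in\mathbb{Z}_n^{\ast}:1-x^2\in\mathbb{Z}_n^{\ast}\}|$ and $S=\sum_{x\in\mathbb{Z}_n^{\ast}}\chi(1-x^2)$; in $S$ I drop the constraint $1-x^2\in\mathbb{Z}_n^{\ast}$ for free since $\chi$ vanishes off the units.

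To compute $N$, I factor $1-x^2=(1-x)(1+x)$, so the three conditions $p\nmid x$, $p\nmid 1-x$, $p\nmid 1+x$ amount to requiring that the class of $x$ modulo $p$ avoids $\{0,1,-1\}$. Since $p\equiv1\pmod 4$ forces $p\geq 5$, these three residues are distinct, leaving $p-3$ admissible classes modulo $p$, each lifting to $p^{\alpha-1}$ elements of $\mathbb{Z}_{p^{\alpha}}$; hence $N=(p-3)p^{\alpha-1}$.

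For $S$, I use $\chi(-1)=1$ (noted just before the lemma) to write $\chi(1-x^2)=\chi(-1)\chi(x^2-1)=\chi(x^2-1)$, whence $S=\sum_{x\in\mathbb{Z}_n^{\ast}}\chi(x^2-1)$. This is exactly the sum evaluated in Lemma \ref{4} with $a=1$, giving $S=-(1+\chi(1))p^{\alpha-1}=-2p^{\alpha-1}$. Substituting,
\begin{align*}
|A|=\frac{1}{2}\left((p-3)p^{\alpha-1}-2p^{\alpha-1}\right)=\frac{p^{\alpha-1}(p-5)}{2}.
\end{align*}

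The only care needed is in the two bookkeeping points above: justifying that the unit restriction on $1-x^2$ may be removed inside $S$ (immediate from $\chi$ being supported on units), and confirming that $0,1,-1$ are genuinely distinct modulo $p$ so that exactly three residue classes are excluded in the count of $N$. Neither is a serious obstacle; the substance of the lemma is carried entirely by the character-sum evaluation of Lemma \ref{4}.
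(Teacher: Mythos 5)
Your proposal is correct and follows essentially the same route as the paper: both split the indicator $\tfrac{1}{2}(1+\chi(1-x^2))$ into a counting term, evaluated as $p^{\alpha-1}(p-3)$ by excluding the residues $0,\pm 1$ modulo $p$, and a character sum handled by Lemma \ref{4} with $a=1$. Your write-up is in fact slightly more careful than the paper's, since you explicitly invoke $\chi(-1)=1$ to pass from $\chi(1-x^2)$ to $\chi(x^2-1)$ and justify dropping the unit constraint inside the character sum.
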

\begin{proof}
	We have
	\begin{align}\label{e5}
	&|\{x\in \mathbb{Z}_{p^\alpha}: p\nmid x,1-x^2; \chi(1-x^2)=1\}|\notag\\
	&=\sum\limits_{\substack{x\in \mathbb{Z}_{p^\alpha} \\p\nmid x,1-x^2}} \frac{1+\chi(1-x^2)}{2}=\frac{1}{2}\sum\limits_{\substack{x\in \mathbb{Z}_{p^\alpha} \\p\nmid x,1-x^2}} 1 + \frac{1}{2} \sum\limits_{\substack{x\in \mathbb{Z}_{p^\alpha} \\p\nmid x,1-x^2}} \chi(1-x^2).
	\end{align} 
	Since $p\nmid x$, let $x=pm+k$ where $0<k<p$ and $m\in \mathbb{Z}$. If $p$ divides $x^2-1$, then  $p$ divides $k^2-1=(k-1)(k+1)$ which yields $k=1$ or $p-1$. Hence the number of $x \in \mathbb{Z}_{p^\alpha}$ such that $p\nmid x$ but $p\mid x^2-1$ is equal to $2p^{\alpha-1}$.
	Now,
	\begin{align*}
	\sum\limits_{\substack{x\in \mathbb{Z}_{p^\alpha} \\p\nmid x,1-x^2}} 1
	&=|\{x\in \mathbb{Z}_{p^\alpha}: p\nmid x\}|-|\{x\in \mathbb{Z}_{p^\alpha}: p\nmid x, p\mid (1-x^2)\}|\\
	&=\phi(p^\alpha)-2p^{\alpha-1}\\
	&=p^{\alpha-1}(p-3).
	\end{align*}
Using this and employing Lemma \ref{4} with $a=1$, \eqref{e5} yields the required result.	
\end{proof}
\begin{lemma}\label{6}
	Let $n=p^\alpha$, where $\alpha\geq 1$ and $p\equiv 1\pmod{4}$. Let $\chi$ be the unique character mod $n$ of order $2$. For $a, b\in \mathbb{Z}_n$, we have 
	\[\sum_{x\in\mathbb{Z}_n} \chi((x-a)(x-b))=\begin{cases}
	p^{\alpha-1}(p-1),& \mbox{ if }p\mid a,p\mid b;\\
	-p^{\alpha-1},&\mbox{ if }p\mid a,p\nmid b;\\
	-p^{\alpha-1},&\mbox{ if }p\nmid a,p\nmid b,p\nmid 1-b a^{-1};\\
	p^{\alpha-1}(p-1),&\mbox{ if }p\nmid a,p\nmid b,p\mid 1-b a^{-1}.
	\end{cases}\]
\end{lemma}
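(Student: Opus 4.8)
The plan is to treat each of the four cases in the statement separately, reducing the general sum $S(a,b):=\sum_{x\in\mathbb{Z}_n}\chi((x-a)(x-b))$ to sums already evaluated in Lemmas \ref{3} and \ref{4}. The two main devices are the translation invariance $\chi(x)=\chi(x+pk)$ of Lemma \ref{3}, which lets me collapse a factor $\chi(x-a)$ to $\chi(x)$ whenever $p\mid a$, and the invertibility of $a$, $b$ (when they are units) and of $2$ (since $p$ is odd), which justifies the multiplicative substitutions and the completing-the-square step below. Note also that $S(a,b)$ is symmetric in $a$ and $b$, so the case $p\nmid a,\ p\mid b$ needs no separate treatment.

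First I would dispose of the cases in which $p$ divides one of $a,b$. If $p\mid a$ and $p\mid b$, then Lemma \ref{3} gives $\chi(x-a)=\chi(x-b)=\chi(x)$, so the integrand equals $\chi(x)^2$, which is $1$ for $x\in\mathbb{Z}_n^{\ast}$ and $0$ otherwise; summing yields $\phi(p^\alpha)=p^{\alpha-1}(p-1)$. If $p\mid a$ but $p\nmid b$, then again $\chi(x-a)=\chi(x)$ and $S(a,b)=\sum_{x\in\mathbb{Z}_n^{\ast}}\chi(x(x-b))$, which is precisely the sum evaluated in the proof of Lemma \ref{4} (see \eqref{e4}), equal to $-p^{\alpha-1}$.

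For the two remaining cases both $a$ and $b$ are units. Substituting $x=ay$ (a bijection of $\mathbb{Z}_n$) and using $\chi(a^2)=1$ reduces $S(a,b)$ to $\sum_{y\in\mathbb{Z}_n}\chi((y-1)(y-c))$ with $c=ba^{-1}$. Completing the square via $u=y-\tfrac{1+c}{2}$, which is legitimate because $2\in\mathbb{Z}_n^{\ast}$, turns this into $\sum_{u\in\mathbb{Z}_n}\chi(u^2-d^2)$ with $d=\tfrac{1-c}{2}$; crucially $p\nmid d$ exactly when $p\nmid 1-ba^{-1}$, which is what distinguishes the last two cases. If $p\nmid d$, I rescale $u=dv$ to obtain $\sum_{v\in\mathbb{Z}_n}\chi(v^2-1)$ and split it into the unit part and the part with $p\mid v$: the unit part equals $-2p^{\alpha-1}$ by Lemma \ref{4} with $a=1$, while each of the $p^{\alpha-1}$ terms with $p\mid v$ contributes $\chi(-1)=1$ by Lemma \ref{3}, giving the total $-p^{\alpha-1}$. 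If instead $p\mid d$, then $d^2\equiv 0$ modulo $p$, so Lemma \ref{3} gives $\chi(u^2-d^2)=\chi(u^2)$, and $\sum_{u\in\mathbb{Z}_n}\chi(u^2)=|\mathbb{Z}_n^{\ast}|=p^{\alpha-1}(p-1)$.

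I expect the main obstacle to be the careful bookkeeping of the non-unit contributions: in several places the summation ranges over all of $\mathbb{Z}_n$, yet $\chi$ vanishes on multiples of $p$ and one must repeatedly peel off, via Lemma \ref{3}, the residues with $p\mid x$ (or $p\mid v$) and evaluate their constant contribution correctly. The other delicate point is verifying that the completing-the-square substitution cleanly separates the two unit cases through the single condition $p\nmid d$; once that dichotomy is in place the rest follows from Lemmas \ref{3} and \ref{4}.
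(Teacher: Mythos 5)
Your proof is correct, and it overlaps with the paper's only partially. The cases with $p\mid a$ are essentially the paper's: Case 1 is identical, and in Case 2 your shortcut of quoting $\sum_{x\in\mathbb{Z}_n^{\ast}}\chi(x(x-b))=-p^{\alpha-1}$ from \eqref{e4} is legitimate, since that computation inside the proof of Lemma \ref{4} is self-contained and valid for any unit $b$; the paper instead re-derives this case by splitting $\mathbb{Z}_n^{\ast}$ into squares and non-squares and applying the statement of Lemma \ref{4}. The genuine divergence is in the unit--unit case. After the common rescaling $x=ay$, the paper shifts by $1$ and uses a multiplicative inversion trick: it inserts $\chi(y^{-2})=1$ to convert $\sum_{y}\chi(y(y+c))$ into the linear sum $\sum_{y}\chi(1+cy)$, evaluated by peeling off the terms with $p\mid y$. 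You instead complete the square, rewriting the sum as $\sum_{u\in\mathbb{Z}_n}\chi(u^2-d^2)$ with $d=(1-ba^{-1})\cdot 2^{-1}$, so that the lemma's dichotomy becomes the single clean condition $p\mid d$ versus $p\nmid d$; the nontrivial sub-case then reduces, after rescaling $u=dv$, to Lemma \ref{4} with $a=1$ together with the boundary contribution $p^{\alpha-1}\chi(-1)=p^{\alpha-1}$ from the terms $p\mid v$. Your route buys a more unified treatment of the last two cases and maximal reuse of Lemma \ref{4} as a black box, at the modest cost of invoking the invertibility of $2$ (harmless, as $p$ is odd) and of tracking the non-unit terms explicitly; the paper's route avoids division by $2$ but requires the separate inversion manipulation. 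Both arguments are complete.
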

	\begin{proof}
		 We consider each of the four cases as given in the statement of the lemma. Let $R$ and $NR$ be the subsets of $\mathbb{Z}_n^{\ast}$ of squares and non-squares, respectively.\\
		\underline{Case 1}: Let $p\mid a$ and $p\mid b$. Then $$\sum\limits_{x\in \mathbb{Z}_n} \chi((x-a)(x-b))=\sum\limits_{x\in R}1+\sum\limits_{x\in NR}1=\phi(n)=p^{\alpha-1}(p-1).$$\\
		\underline{Case 2}: Let $p\mid a$ and $p\nmid b$. Then 
		\begin{align}\label{e12}
		\sum\limits_{x \in \mathbb{Z}_n} \chi((x-a)(x-b))&=\sum\limits_{x\in R} \chi(x-b)-\sum\limits_{x\in NR} \chi(x-b)\notag\\
		&=2 \sum\limits_{x\in R} \chi(x-b)-\sum\limits_{x\in \mathbb{Z}_n^{\ast}} \chi(x-b).
		\end{align}
		We find that 
		\begin{equation}\label{e13}
		2\sum\limits_{x\in R} \chi(x-b)=\sum\limits_{x\in \mathbb{Z}_n^{\ast}} \chi(x^2-b)=-(1+\chi(b))p^{\alpha-1}
		\end{equation}
		and
		\begin{equation}\label{e14}
		\sum\limits_{x\in \mathbb{Z}_n^{\ast}} \chi(x-b)=-\sum\limits_{p\mid x} \chi(x-b)=-\chi(b)p^{\alpha-1}.
		\end{equation}
		Combining \eqref{e12}, \eqref{e13} and \eqref{e14} we obtain the result.\\
		\underline{Case 3}: Let $p\nmid a$ and $p\nmid b$. Then
		\begin{align}\label{ez18}
		\sum\limits_{x\in \mathbb{Z}_n}\chi((x-a)(x-b))&=\sum\limits_{x\in \mathbb{Z}_n}\chi((ax-a)(ax-b))\notag\\
		&=\sum\limits_{x\in \mathbb{Z}_n}\chi((x-1)(x-b a^{-1}))\notag\\
		&=\sum\limits_{y\in\mathbb{Z}_n^{\ast}}\chi(y)\chi(y+1-b a^{-1}).
		\end{align}
		The last equality is obtained by using the substitution $x-1=y$. 
		If $p\mid 1-b a^{-1}$ then \eqref{ez18} becomes 
		\begin{equation}\label{e16}
		\sum\limits_{y\in\mathbb{Z}_n^{\ast}}\chi(y)\chi(y+1-b a^{-1})=\sum\limits_{ y\in\mathbb{Z}_n^{\ast}}1=\phi(n)=p^{\alpha-1}(p-1),
		\end{equation} and
		if $p\nmid 1-b a^{-1}$ then using the substitution $c=1-ba^{-1}$ we have in \eqref{ez18},
		\begin{align}
		\sum\limits_{y\in\mathbb{Z}_n^{\ast}}\chi(y)\chi(y+1-b a^{-1})&=\sum_{y\in\mathbb{Z}_n^{\ast}} \chi(1+cy^{-1})\notag\\
		&=\sum_{y\in\mathbb{Z}_n^{\ast}} \chi(1+cy)\notag\\
		&=\chi(c)\sum_{y\in\mathbb{Z}_n^{\ast}}\chi(c^{-1}+y)\notag\\
		&=\chi(c)\left[ -\sum_{p\mid y}\chi(c^{-1}+y)\right] \notag\\
		&=-p^{\alpha-1}.\notag 
		\end{align}
		This completes the proof of the lemma.
	\end{proof}
\section{Some basic properties of $G_n$}
In this section we look at some properties of the graph $G_n$. We have seen that the graph is well-defined for $n=2^sp_{1}^{\alpha_{1}}\cdots p_{k}^{\alpha_{k}}$, where the distinct primes $p_i\equiv 1\pmod{4}$ for all $i=1, \ldots, k$ and $s=0$ or $1$. So we consider these forms of $n$.
\par A graph is said to be regular if each vertex has the same degree, that is, each vertex is adjacent to the same number of vertices. Like the Paley graph, we see that $G_n$ is also regular.
\begin{proposition}\label{prop4}
	Let $n=2^sp_{1}^{\alpha_{1}}\ldots p_{k}^{\alpha_{k}}$, where the distinct primes $p_i\equiv 1\pmod{4}$ for all $i=1, \ldots , k$ and $s=0$ or $1$. Then $G_n$ is regular of degree equal to $\dfrac{\phi(n)}{2^k}$. 
\end{proposition}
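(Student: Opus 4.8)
The plan is to exploit the translation symmetry of $G_n$ so as to reduce every vertex degree to a single count of unit squares. First I would observe that for each $c\in\mathbb{Z}_n$ the translation map $\tau_c\colon v\mapsto v+c$ is an automorphism of $G_n$: adjacency of $a$ and $b$ depends only on the difference $a-b$, and $(a+c)-(b+c)=a-b$, so $\tau_c$ carries edges to edges and non-edges to non-edges. Since the translations act transitively on the vertex set $\mathbb{Z}_n$, all vertices have the same degree; thus $G_n$ is regular and it suffices to compute $\deg(0)$.

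Next I would identify the neighbourhood of the vertex $0$. A vertex $b$ is adjacent to $0$ precisely when $-b\equiv x^2\pmod{n}$ for some $x\in\mathbb{Z}_n^{\ast}$, that is, when $-b\in R$, where $R$ denotes the set of squares of units of $\mathbb{Z}_n^{\ast}$. Because $b\mapsto -b$ is a bijection of $\mathbb{Z}_n$, the number of such $b$ equals $|R|$, and hence $\deg(0)=|R|$. (No loop arises, since $0\notin R$ as $0$ is not a unit.) It therefore remains to prove $|R|=\phi(n)/2^k$.

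For this I would regard squaring as the group endomorphism $\sigma\colon\mathbb{Z}_n^{\ast}\to\mathbb{Z}_n^{\ast}$, $x\mapsto x^2$, whose image is exactly $R$. By the first isomorphism theorem $|R|=\phi(n)/N$, where $N=|\ker\sigma|$ is the number of solutions of $x^2\equiv 1\pmod{n}$ in $\mathbb{Z}_n^{\ast}$. To evaluate $N$ I would invoke the Chinese Remainder decomposition $\mathbb{Z}_n^{\ast}\cong\mathbb{Z}_{2^s}^{\ast}\times\mathbb{Z}_{p_1^{\alpha_1}}^{\ast}\times\cdots\times\mathbb{Z}_{p_k^{\alpha_k}}^{\ast}$, so that $N$ is the product over the factors of their numbers of square roots of unity. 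Each $\mathbb{Z}_{p_i^{\alpha_i}}^{\ast}$ is cyclic of even order $p_i^{\alpha_i-1}(p_i-1)$ and hence contains exactly two solutions of $x^2=1$, whereas the factor $\mathbb{Z}_{2^s}^{\ast}$ is trivial for both $s=0$ and $s=1$ and so contributes exactly one. Consequently $N=2^k$, and $\deg(0)=|R|=\phi(n)/2^k$, as claimed.

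The reduction and the isomorphism-theorem count are routine; the only step needing a little care is the computation of $N$, and in particular the verification that the $2$-part contributes the factor $1$ rather than $2$. This is exactly where the hypothesis $s\in\{0,1\}$ (equivalently, the shape of $n$ singled out in Proposition \ref{prop1}) is used: for $s\geq 2$ the group $\mathbb{Z}_{2^s}^{\ast}$ would acquire additional square roots of unity and the formula would fail.
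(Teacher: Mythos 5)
Your proof is correct and follows essentially the same route as the paper: the degree of every vertex equals the number $|R|$ of unit squares, $|R|=\phi(n)/N$ where $N$ is the number of square roots of unity (the paper's $t$, your kernel of the squaring map), and $N=2^k$ by the Chinese Remainder decomposition with the factor $\mathbb{Z}_{2^s}^{\ast}$ contributing trivially when $s\in\{0,1\}$. You merely make explicit two points the paper treats as immediate, namely regularity via translation automorphisms and the first-isomorphism-theorem count.
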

\begin{proof}
	Let $R$ be the subset of $\mathbb{Z}_n^{\ast}$ of squares. Clearly, $G_n$ is regular of degree equal to $|R|$. Let $a_1,\ldots, a_t$ be the distinct elements in $\mathbb{Z}_n^{\ast}$ whose squares are equal to $1$. Then $|R|=\dfrac{\phi(n)}{t}$. We now find the number of solutions of $x^2\equiv 1 \pmod{n}$. This is equivalent to finding the solutions of $$x^2\equiv 1 \pmod{2^s},~~x^2\equiv 1 \pmod{p_1^{\alpha_{1}}}, ~~\ldots, ~~x^2\equiv 1 \pmod{p_k^{\alpha_{k}}}.$$ If $s=1$ then $x^2\equiv 1 \pmod{2^s}$ has 1 solution, and $x^2\equiv 1 \pmod{p_1^{\alpha_{1}}},\ldots,x^2\equiv 1 \pmod{p_k^{\alpha_{k}}}$ have two solutions each. So $t$ must be equal to $2^k$.
	Therefore, $|R| =\dfrac{\phi(n)}{2^k}$. This completes the proof of the result.
\end{proof}
A graph is said to be self-complementary if there exists a graph isomorphism from the graph to its complement. In the following, we prove that $G_n$ is not self-complementary unless $n$ is a prime.
\begin{proposition}
Let $n=2^sp_{1}^{\alpha_{1}}\cdots p_{k}^{\alpha_{k}}$, where the distinct primes $p_i\equiv 1\pmod{4}$ for all $i=1, \ldots , k$ and $s=0$ or $1$. Then $G_n$ is not self-complementary unless $n$ is a prime.
\end{proposition}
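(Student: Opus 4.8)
The plan is to exploit the regularity of $G_n$ from Proposition~\ref{prop4} together with the elementary fact that a self-complementary regular graph is forced to have a very specific degree. Concretely, recall that if a graph $G$ on $N$ vertices is $d$-regular, then its complement $\overline{G}$ is $(N-1-d)$-regular; since any isomorphism preserves the degree sequence, $G\cong\overline{G}$ would force $d=N-1-d$, i.e.\ $d=\tfrac{N-1}{2}$. Applying this with $N=n$ and, by Proposition~\ref{prop4}, $d=\phi(n)/2^{k}$, self-complementarity of $G_n$ would require
$$\frac{\phi(n)}{2^{k}}=\frac{n-1}{2},\qquad\text{equivalently}\qquad \phi(n)=2^{k-1}(n-1).$$
The whole proof then reduces to showing that this equation forces $n$ to be a prime.

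First I would dispose of the even case $s=1$ at once: then $n$ is even, so $\tfrac{n-1}{2}$ is not an integer while the degree $\phi(n)/2^{k}$ certainly is, an immediate contradiction. For $s=0$ I would analyse the displayed equation by cases on $k$. If $k\ge 2$, then $2^{k-1}(n-1)\ge 2(n-1)=2n-2$, whereas $\phi(n)\le n-1<2n-2$ for $n>1$; hence equality is impossible and no $n$ with two or more distinct odd prime factors can occur. If $k=1$, so that $n=p_1^{\alpha_1}$, the equation collapses to $\phi(n)=n-1$, and I would invoke the standard characterization that $\phi(n)=n-1$ holds precisely when $n$ is prime, which forces $\alpha_1=1$ and hence $n=p_1$.

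Putting the cases together shows that $G_n$ self-complementary implies $n$ prime, which is exactly the contrapositive of the statement to be proved. I expect the only point needing genuine care to be the clean reduction to the single equation $\phi(n)=2^{k-1}(n-1)$ and the strictness of the bound $\phi(n)\le n-1$ for composite $n$; the remaining case checks are routine. Finally, I would add a remark that the exceptional case is sharp: when $n=p$ the graph $G_p$ is the classical Paley graph, which is self-complementary, so primality is exactly the dividing line.
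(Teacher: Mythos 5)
Your proposal is correct and takes essentially the same route as the paper: both reduce self-complementarity to the single equation $\phi(n)/2^{k}=(n-1)/2$, i.e.\ $\phi(n)=2^{k-1}(n-1)$ (the paper gets it by comparing the edge count $n\phi(n)/2^{k+1}$ with the $n(n-1)/4$ edges a self-complementary graph must have, you get it by matching degrees of $G_n$ and its complement — equivalent computations for a regular graph). Your subsequent case analysis ($s=1$ parity, $k\ge 2$ size bound, $k=1$ giving $\phi(n)=n-1$) is in fact more detailed than the paper's, which simply asserts the equation is impossible except when $k=1$, $s=0$, $\alpha_{1}=1$.
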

\begin{proof}
	The graph $G_n$ has  $\dfrac{n\dfrac{\phi(n)}{2^k}}{2}$ number of edges. But we know that a self-complementary graph must have $\dfrac{n(n-1)}{4}$ number of edges.
	So, if $G_n$ is self-complementary, we must have $\dfrac{n\dfrac{\phi(n)}{2^k}}{2}=\dfrac{n(n-1)}{4}$ which gives $\phi(n)=(n-1)2^{k-1}$, which is not possible except for the case $k=1,s=0, \alpha_{1}=1$. This completes the proof of the proposition.
\end{proof}
\begin{proposition}
Let $n=2^sp_{1}^{\alpha_{1}}\cdots p_{k}^{\alpha_{k}}$, where the distinct primes $p_i\equiv 1\pmod{4}$ for all $i=1, \ldots , k$ and $s=0$ or $1$. Then $G_n$ is not a complete graph.
\end{proposition}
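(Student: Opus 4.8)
The plan is to use the regularity of $G_n$ established in Proposition \ref{prop4}, which shows that every vertex has degree $\phi(n)/2^k$. Since the complete graph on $n$ vertices is regular of degree $n-1$, it suffices to show that the common degree of $G_n$ is \emph{strictly} smaller than $n-1$; then no vertex can be adjacent to all of the remaining $n-1$ vertices, so $G_n$ cannot be complete.

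First I would observe that under the stated hypotheses we always have $k\geq 1$: if $k=0$ then $n=2^s$ with $s\in\{0,1\}$, forcing $n=1$ or $n=2$, both of which are excluded. Hence $2^k\geq 2$. Next, since $n>2$ we have the standard bound $\phi(n)\leq n-1$, with equality precisely when $n$ is prime. Combining these gives
\[
\frac{\phi(n)}{2^k}\leq\frac{n-1}{2^k}\leq\frac{n-1}{2}<n-1,
\]
where the last inequality holds because $n>1$. Thus the degree of each vertex of $G_n$ is strictly less than $n-1$, and therefore $G_n$ is not complete.

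Equivalently, one could phrase this through an edge count, exactly parallel to the preceding proposition: a complete graph on $n$ vertices has $\binom{n}{2}=\tfrac{n(n-1)}{2}$ edges, whereas $G_n$ has $\tfrac{1}{2}\,n\,\phi(n)/2^k$ edges by regularity, and the displayed inequality shows that these two quantities can never coincide.

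There is no genuine obstacle here beyond one point that must be tracked carefully: the claim has to hold even when $n$ is prime, the case in which $\phi(n)=n-1$ so that the bound $\phi(n)\leq n-1$ is tight. In that situation the conclusion relies essentially on the factor $2^k\geq 2$ — that is, on the fact that only half of the units of $\mathbb{Z}_n$ are squares — so the degree $(n-1)/2$ still falls short of $n-1$. Verifying that the strict inequality persists across all admissible forms of $n$, including this boundary case, is the only thing that needs checking.
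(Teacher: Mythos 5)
Your proof is correct and takes essentially the same approach as the paper: both arguments reduce to comparing the number of squares $\phi(n)/2^k$ in $\mathbb{Z}_n^{\ast}$ (via Proposition \ref{prop4}) with $n-1$, the paper doing so through the set of differences and you through vertex degrees. In fact your write-up supplies a detail the paper leaves implicit, namely the justification that $\phi(n)/2^k \leq (n-1)/2 < n-1$ because $k\geq 1$ and $n>2$, which covers the boundary case where $n$ is prime.
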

\begin{proof}
	A necessary condition that $G_n$ is a complete graph is 
	$x-y$ is a square in $\mathbb{Z}_n^{\ast}$ for all distinct $x, y\in \mathbb{Z}_n$. This implies that $\left\lbrace x-y: x, y \in \mathbb{Z}_n, x\neq y\right\rbrace \subseteq R$, where $R$ is the subset of $\mathbb{Z}_n^{\ast}$ of squares. 
	Hence, $n-1\leq \dfrac{\phi(n)}{2^k}$, which is not possible. Thus $G_n$ is not complete.
\end{proof}
\begin{proposition}
	Let $n=2^sp_{1}^{\alpha_{1}}\cdots p_{k}^{\alpha_{k}}$, where the distinct primes $p_i\equiv 1\pmod{4}$ for all $i=1, \ldots , k$ and $s=0$ or $1$. Then the graph $G_n$ is connected.
\end{proposition}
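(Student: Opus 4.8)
The plan is to exploit the translation-invariance of $G_n$. For any $c\in\mathbb{Z}_n$ the map $x\mapsto x+c$ preserves adjacency, since $(x+c)-(y+c)=x-y$; hence $G_n$ is the Cayley graph $\mathrm{Cay}(\mathbb{Z}_n,R)$ of the additive group $(\mathbb{Z}_n,+)$ with connection set $R=\{x^2:x\in\mathbb{Z}_n^{\ast}\}$, the set of square units. By Proposition \ref{prop1} we have $-1\in R$, so $R$ is symmetric ($R=-R$) and this is a genuine undirected Cayley graph. First I would recall the standard fact that a Cayley graph on an abelian group is connected precisely when its connection set generates the group; thus it suffices to prove that the additive subgroup of $\mathbb{Z}_n$ generated by $R$ is all of $\mathbb{Z}_n$.

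The key observation is that $1=1^2$ is the square of the unit $1$, so $1\in R$. Since $1$ already generates the additive group $\mathbb{Z}_n$, the connection set $R$ generates $\mathbb{Z}_n$ and $G_n$ is connected. Equivalently, and avoiding any appeal to Cayley-graph machinery, for each $i$ the difference $(i+1)-i=1$ is a square unit, so the vertices $i$ and $i+1$ are adjacent; this exhibits the spanning path $0-1-2-\cdots-(n-1)$, which by itself shows that $G_n$ is connected. This elementary version has the advantage of applying verbatim to every $n$ of the stated form, rather than only to prime powers.

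I do not anticipate a serious obstacle: the whole content reduces to the remark that $1$ is a square unit, which forces consecutive integers to be adjacent. The only points requiring a line of justification are that every edge-defining difference lies in $\mathbb{Z}_n^{\ast}$ (so that $R\subseteq\mathbb{Z}_n^{\ast}$) and that the edge relation is symmetric, both of which follow from Proposition \ref{prop1} via $-1\in R$. If the cleaner Cayley-graph formulation is preferred, the single nontrivial ingredient is the cited characterization of connectivity of Cayley graphs in terms of generation, which is routine for the cyclic group $\mathbb{Z}_n$.
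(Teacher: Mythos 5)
Your proposal is correct and, in its elementary form, is exactly the paper's argument: since $1=1^2$ is a square unit, consecutive residues are adjacent, and the path $x,\,x+1,\,\ldots,\,y$ connects any two vertices. The Cayley-graph formulation is merely a repackaging of this same observation, so there is no substantive difference in approach.
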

\begin{proof}
	Let $x<y$ be two distinct vertices of the graph. Let $(i,j)$ denote the edge between two vertices $i$ and $j$. Then the edges $(x,x+1),(x+1,x+2), \ldots ,(y-1,y)$ form a path between $x$ and $y$. Hence, $G_n$ is connected.
\end{proof}
Since $1$ is always a quadratic residue modulo $n$, $G_n$ always contains a spanning cycle. In the following proposition we check when the graph is a cycle.
\begin{proposition}
Let $n=2^sp_{1}^{\alpha_{1}}\cdots p_{k}^{\alpha_{k}}$, where the distinct primes $p_i\equiv 1\pmod{4}$ for all $i=1, \ldots, k$ and $s=0$ or $1$. Then $G_n$ is a cycle if and only if $n=5$ or $10$.
\end{proposition}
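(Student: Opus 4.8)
The plan is to reduce the whole statement to a single degree computation. A connected graph on at least three vertices is a cycle if and only if it is $2$-regular, and we already know from the preceding proposition that $G_n$ is connected. By Proposition \ref{prop4}, $G_n$ is regular of degree $\phi(n)/2^k$. Hence $G_n$ is a cycle precisely when $\phi(n)/2^k = 2$, that is, when
$$\phi(n) = 2^{k+1}.$$
So the problem becomes the purely arithmetic task of solving this equation over the admissible values of $n$.

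Next I would expand the totient. Since $\phi(2^0)=\phi(2)=1$, for both $s=0$ and $s=1$ we have
$$\phi(n) = \prod_{i=1}^k p_i^{\alpha_i - 1}(p_i - 1).$$
The right-hand side must equal a power of $2$. As each $p_i$ is odd, the factor $p_i^{\alpha_i-1}$ is odd and can contribute to a power of $2$ only if it equals $1$; thus every $\alpha_i=1$. Likewise each $p_i-1$ must itself be a power of $2$, and because $p_i\equiv 1\pmod 4$ we have $4\mid(p_i-1)$, so writing $p_i-1=2^{b_i}$ forces $b_i\geq 2$.

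Finally I would count exponents. From $\prod_{i=1}^k(p_i-1)=2^{\sum_i b_i}=2^{k+1}$ we get $\sum_{i=1}^k b_i=k+1$, and combining this with $b_i\geq 2$ yields $2k\leq k+1$, hence $k\leq 1$. The case $k=0$ is impossible since then $\phi(n)=1\neq 2^{k+1}$ (and it corresponds only to the excluded values $n=1,2$), so $k=1$. Then $b_1=2$, i.e. $p_1=5$, with $s\in\{0,1\}$, giving exactly $n=5$ or $n=10$. For the converse one simply checks that for these two values $\phi(n)/2^k=4/2=2$, so $G_n$ is connected and $2$-regular, hence a cycle. I do not expect a genuine obstacle here: the only point requiring care is the opening graph-theoretic equivalence (that a connected $2$-regular graph is a cycle), after which everything reduces to an elementary divisibility argument driven by the hypothesis $p_i\equiv 1\pmod 4$.
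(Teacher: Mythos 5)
Your proof is correct, but it follows a genuinely different route from the paper. The paper's argument is a short witness argument: it first observes that $G_n$ is a cycle if and only if $1$ and $-1$ are the only squares in $\mathbb{Z}_n^{\ast}$ (your opening equivalence, phrased via the square-set rather than the degree), and then notes that any admissible $n\neq 5,10$ satisfies $n>10$ and $3\in\mathbb{Z}_n^{\ast}$ (since $3$ never divides an admissible $n$), so $9=3^2$ is a square distinct from both $1$ and $n-1$, killing the cycle property. You instead invoke the regularity formula of Proposition \ref{prop4} and reduce everything to the totient equation $\phi(n)=2^{k+1}$, which you then solve by an elementary $2$-adic count using $4\mid p_i-1$. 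What each buys: the paper's proof is shorter and needs no counting at all, just the single explicit non-square-root-of-unity witness $9$; yours is more systematic, derives $p_1=5$ as the unique solution of $p-1=2^2$ rather than by inspection, and would adapt directly to classifying other prescribed degrees, at the cost of depending on Proposition \ref{prop4} and the (correct, but worth stating) facts that a connected $2$-regular graph on at least three vertices is a cycle and that the two values $n=5,10$ must still be checked on the converse side, which you do. Both proofs rely on the connectivity proposition, and I see no gap in yours.
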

\begin{proof}
	We observe that $G_n$ is a cycle if and only if $1$ and $-1$ are the only squares modulo $n$. Let the graph be a cycle. If $n\neq 5, 10$, then $n>10$ and $3\in \mathbb{Z}_n^{\ast}$, which means that $9$ is a square in $\mathbb{Z}_n^{\ast}$ which is neither $1$ nor $-1$. This completes the proof of the proposition.
\end{proof}
A graph is called vertex-transitive if given any two vertices in the graph, there is a graph isomorphism from one of the vertices to the other.
\begin{proposition}\label{prop3}
Let $n=2^sp_{1}^{\alpha_{1}}\cdots p_{k}^{\alpha_{k}}$, where the distinct primes $p_i\equiv 1\pmod{4}$ for all $i=1, \ldots , k$ and $s=0$ or $1$. Then	$G_n$ is a vertex-transitive graph.
\end{proposition}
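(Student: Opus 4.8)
The plan is to prove vertex-transitivity by exhibiting, for each ordered pair of vertices, an explicit graph automorphism carrying the first to the second, using the translation maps on $\mathbb{Z}_n$. First I would define, for each $c\in\mathbb{Z}_n$, the map $\tau_c\colon\mathbb{Z}_n\to\mathbb{Z}_n$ given by $\tau_c(x)=x+c$. Since $\tau_c$ has inverse $\tau_{-c}$, it is a bijection of the vertex set, so it remains only to check that it respects adjacency.

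The key step is to verify that each $\tau_c$ preserves edges. Suppose $ab$ is an edge of $G_n$, so that $a-b\equiv x^2\pmod n$ for some unit $x\in\mathbb{Z}_n^{\ast}$. Because $\tau_c(a)-\tau_c(b)=(a+c)-(b+c)=a-b$, the relevant difference is unchanged under $\tau_c$, and hence $\tau_c(a)\tau_c(b)$ is again an edge. Applying the same computation to the inverse map $\tau_{-c}$ shows that non-edges go to non-edges as well, so $\tau_c$ is a graph automorphism of $G_n$. Finally, given any two vertices $u,v\in\mathbb{Z}_n$, the translation $\tau_{v-u}$ is an automorphism sending $u$ to $v$, which is exactly the vertex-transitivity assertion.

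I do not anticipate any serious obstacle here, as the argument is essentially the standard Cayley-graph translation trick. The only point worth flagging is that adjacency must be a symmetric relation for $G_n$ to be a well-defined undirected graph on which these translations act; this is ensured by Proposition \ref{prop1}, which guarantees that $-1$ is a unit square modulo $n$ for the admissible forms of $n$, so that $a-b$ is a unit square precisely when $b-a$ is.
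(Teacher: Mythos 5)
Your proof is correct and follows essentially the same approach as the paper: the paper uses the affine maps $x\mapsto ax+b$ with $a$ a square unit, of which your translations $\tau_c(x)=x+c$ are the special case $a=1$, and translations alone indeed suffice for vertex-transitivity since adjacency depends only on the difference of vertices. Your additional remark that Proposition \ref{prop1} guarantees the adjacency relation is symmetric is a sound observation and does not affect the argument.
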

\begin{proof}
Let  $a,b\in\mathbb{Z}_n^{\ast} $ and $a$ is a square modulo $n$. Then for $x\in\mathbb{Z}_n^{\ast}$,	$x \mapsto ax+b$ is a graph isomorphism on $G_n$. Using this isomorphism and taking appropriate values of $a$ and $b$ we see that $G_n$ becomes vertex-transitive.
\end{proof}
Let $G=(V,E)$ be a graph. If $\left\lbrace G_i \right\rbrace_{i\in I} $ is a family of edge-disjoint subgraphs of a graph $G$ such that $E(G)=\cup_{i\in I}E(G_i)$, we write $G=\bigoplus_{i\in I}G_i.$ In this case if $G_i\cong H$ for every $i\in I$, then we write $G=\bigoplus_{i\in I}H.$ We follow the notation given in \cite[p. 2855]{ashrafi2010unit}.
	Since our main focus shall be on $G_n$ for the case $n=p^\alpha$  where $p$ is an odd prime, $p\equiv 1\pmod{4}$ and $\alpha\geq 1$, we prove the following result about the structure of the graph $G_n$.
	\begin{proposition}
		Let $\alpha$ be a positive integer. Let $n=p^\alpha$, where $p$ is a prime satisfying $p\equiv 1\pmod{4}$. Let $\chi$ be the quadratic character mod $n$. Let $G(p)$ be the Paley graph of order $p$. Then a copy of $G(p)$ exists in the graph $G_n$. In fact, we can write $$G_n=\bigoplus_{i=1}^{p^{\alpha-1}} G(p)\bigoplus \left(\bigoplus_{j=1}^{\frac{p^{\alpha-1}(p^{\alpha-1}-1)}{2} }\left(\bigoplus_{m=1}^{p} K_{1,\frac{p-1}{2}} \right) \right),$$ where $K_{1,\frac{p-1}{2}}$ is a complete bipartite graph.	
	\end{proposition}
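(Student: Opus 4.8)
The plan is to recognize $G_n$ as a \emph{blow-up} of the Paley graph $G(p)$ and then to partition its edge set directly. The key structural observation is that adjacency in $G_n$ depends only on residues modulo $p$. Write each vertex uniquely as $x = r + pt$ with $0 \le r \le p-1$ and $0 \le t \le p^{\alpha-1}-1$. Then $x = r+pt$ and $x' = r'+pt'$ are adjacent iff $x-x'$ is a square in $\mathbb{Z}_n^{\ast}$, i.e. iff $\chi(x-x')=1$. By Lemma~\ref{3}, $\chi$ is invariant under translation by multiples of $p$, so $\chi(x-x')=\chi(r-r')$ depends only on $r-r' \pmod p$. Hence $x\sim x'$ forces $r\neq r'$ (if $r=r'$ then $p\mid x-x'$ and $\chi(x-x')=0$), and for $r\neq r'$ the condition $\chi(r-r')=1$ is precisely that $r-r'$ is a quadratic residue mod $p$, i.e. that $r\sim r'$ in $G(p)$. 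Thus adjacency ignores the ``levels'' $t,t'$ and is governed entirely by the Paley adjacency of the residues.

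First I would extract the copies of $G(p)$. For each level $t$ set $D_t=\{r+pt : 0\le r\le p-1\}$; these $p^{\alpha-1}$ sets partition $\mathbb{Z}_n$. By the previous paragraph the map $r+pt\mapsto r$ is a graph isomorphism from the subgraph induced on $D_t$ onto $G(p)$, so each $D_t$ is a copy of $G(p)$; being vertex-disjoint, these copies are edge-disjoint, and together they account for exactly the edges joining vertices of equal level. In particular this already exhibits a copy of $G(p)$ inside $G_n$.

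Next I would organize the edges joining distinct levels into stars. Fix an unordered pair $\{t,t'\}$ with $t<t'$; there are $\binom{p^{\alpha-1}}{2}=\frac{p^{\alpha-1}(p^{\alpha-1}-1)}{2}$ such pairs. The edges between $D_t$ and $D_{t'}$ form a bipartite graph in which $r+pt$ is joined to $r'+pt'$ exactly when $r\sim r'$ in $G(p)$; since $G(p)$ is $\frac{p-1}{2}$-regular, each of the $p$ vertices $r+pt$ has exactly $\frac{p-1}{2}$ neighbours on the side $D_{t'}$. Centering a star at each vertex $r+pt$, with leaves its $\frac{p-1}{2}$ neighbours in $D_{t'}$, decomposes this bipartite graph into exactly $p$ edge-disjoint copies of $K_{1,\frac{p-1}{2}}$, and every inter-level edge lies in precisely one such star (that of its level pair, centered at its lower-level endpoint).

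Finally I would verify that these pieces are pairwise edge-disjoint and exhaust $E(G_n)$: every edge joins either two vertices of equal level (captured by a unique $D_t$) or two vertices of distinct levels (captured by a unique star), and the two families overlap in no edge. This produces exactly $p^{\alpha-1}$ copies of $G(p)$ together with $p\cdot\frac{p^{\alpha-1}(p^{\alpha-1}-1)}{2}$ copies of $K_{1,\frac{p-1}{2}}$, which is the asserted decomposition. A comparison of edge counts, using the regularity degree $\frac{\phi(n)}{2}$ from Proposition~\ref{prop4}, gives a quick consistency check. The conceptual heart is the reduction to residues via Lemma~\ref{3}; the only genuine bookkeeping lies in the inter-level edges, namely confirming that centering the stars at the lower level makes the per-pair star decompositions globally edge-disjoint.
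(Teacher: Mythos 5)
Your proof is correct and takes essentially the same approach as the paper's: partition $\mathbb{Z}_{p^\alpha}$ into the $p^{\alpha-1}$ blocks $\{kp, kp+1, \ldots, kp+p-1\}$, use the translation-invariance of $\chi$ (Lemma~\ref{3}, which is the paper's binomial-theorem argument in disguise) to show each block induces a copy of $G(p)$, and account for all remaining inter-block edges as stars $K_{1,\frac{p-1}{2}}$. If anything, your bookkeeping for the stars is tidier than the paper's: by centering the stars of each block pair at the lower-level endpoints you obtain a genuinely edge-disjoint family realizing the $\bigoplus$ decomposition, whereas the paper describes stars around every vertex and then compensates for counting each inter-block edge twice.
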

	\begin{proof}
		Let $k$ be an integer such that $0\leq k\leq p^{\alpha-1}-1$. We define a map from $G(p)$ to the subgraph of $G_n$ induced by the vertices  $\{ kp,kp+1, \ldots, kp+p-1\} $. For $i\in \mathbb{F}_p=\{ 0,1, \ldots, p-1\}$, let $i\longmapsto kp+i$. We show that this is an isomorphism. Let $i$ and $j$ be elements of $\mathbb{F}_p$ such that $i-j$ is a quadratic residue in $\mathbb{F}_p^{\ast}$. Then $(i-j)^{\frac{p-1}{2}} \equiv 1 \pmod{p}$ and so $(i-j)^{\frac{p-1}{2}}=1+p\ell$ for some $\ell\in\mathbb{Z}$. Therefore,
		$ (i-j)^{\frac{p-1}{2}\times p^{\alpha-1}}=(1+p\ell)^{p^{\alpha-1}}\equiv 1 \pmod{p^\alpha}$ by using binomial theorem and proceeding in the same way as in Lemma \ref{1} after replacing $\frac{\phi(n)}{2}$ by $p^{\alpha-1}$ in the statement of the lemma. So, $\chi((kp+i)-(kp+j))=\chi(i-j)=1$ and hence $(kp+i)-(kp+j)$ is a square modulo $p^\alpha$ and thus there is an edge between the vertices $kp+i$ and $kp+j$ in the induced subgraph of $G_n$. For the converse, let $kp+i$ and $kp+j$ be two vertices in the induced subgraph of $G_n$ connected by an edge, where $0\leq i,j\leq p-1$. Then $i-j\equiv x^2\pmod{p^\alpha}$ for some $1\leq x\leq p-1$. This implies that $i-j\equiv x^2\pmod{p}$, and hence there is an edge between the vertices $i$ and $j$ in $G(p)$.
		\par So each induced subgraph of $G_n$ comprising of the vertices $\{ kp,kp+1, \ldots, kp+p-1\} $ where $0\leq k\leq p^{\alpha-1}-1$, is a copy of $G(p)$. 
		Each such induced subgraph has $\dfrac{p(p-1)}{4}$ edges, and there are $p^{\alpha-1}$ such induced subgraphs. The total number of edges of $G_n$ exhausted this way is $p^{\alpha-1}\dfrac{p(p-1)}{4}.$ The number of edges remaining in $G_n$ is $$\dfrac{n\phi(n)}{4}-p^{\alpha-1}\dfrac{p(p-1)}{4}=p^\alpha\left( \dfrac{p-1}{4}\right) (p^{\alpha-1}-1).$$ We see how these edges are exhuasted as follows. Let $r_1,r_2,\ldots,r_{\frac{p-1}{2}}$ be the quadratic residues in the range $0$ to $p-1$. Then for $1 \leq k \leq p^{\alpha-1}-1,$
		$$\chi(r_1+kp)=1,~\chi(r_2+kp)=1,\ldots,\chi(r_{\frac{p-1}{2}}+kp)=1.$$
		So, $r_1+kp,r_2+kp,\ldots,r_{\frac{p-1}{2}}+kp$ are quadratic residues too. In fact these are the quadratic residues in the range $kp$ to $kp+p-1$. So, from the induced subgraph of $\{ 0,1,\ldots,p-1 \} $ we have the following edges: 
		\begin{align*}
		&0 \text{~~has edges with the vertices~~} r_1+kp,r_2+kp,\ldots,r_{\frac{p-1}{2}}+kp;\\
		&1 \text{~~has edges with the~~ } r_1+kp+1,r_2+kp+1,\ldots,r_{\frac{p-1}{2}}+kp+1;\\
		&\vdots\\
		&p-1 \text{~~has edges with the vertices~~} r_1+kp+p-1,r_2+kp+p-1,\ldots,r_{\frac{p-1}{2}}+kp+p-1.
		\end{align*} 
		Since each edge is counted twice, the total number of edges among these $p^{\alpha-1}$ induced subgraphs is $\dfrac{\dfrac{p(p-1)}{2}(p^{\alpha-1}-1)p^{\alpha-1}}{2}$. This completes the proof.	
	\end{proof}
\section{Proof of Theorem \ref{t1} and Theorem \ref{t2}}
We first prove some lemmas which will be required to prove the main results.
\begin{lemma}\label{9}
For a positive integer $\alpha$, let $n=p^\alpha$, where $p$ is a prime satisfying $p\equiv 1\pmod{4}$. Let $\chi$ be the quadratic character mod $n$. 
Let 
$$S=\sum\limits_{x\in\mathbb{Z}_n} \sum\limits_{y\in\mathbb{Z}_n} \chi((1-x^2)(1-y^2) (x^2-y^2))$$
and 
$$S_0=\mathop{\sum\limits \sum\limits}_{(x,y)\in X} \chi((1-x^2)(1-y^2)(x^2-y^2))$$ where 
$X=\{(x,y)\in \mathbb{Z}_n \times \mathbb{Z}_n : p \nmid x,y,1-x^2,1-y^2, x^2-y^2\}$. Then $S_0=S+4p^{2\alpha-2}$.
\end{lemma}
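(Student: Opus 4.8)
The plan is to compute the difference $S-S_0$, which is exactly the sum of the summand $\chi\bigl((1-x^2)(1-y^2)(x^2-y^2)\bigr)$ over the complement $X^c=(\mathbb{Z}_n\times\mathbb{Z}_n)\setminus X$, and to show that it equals $-4p^{2\alpha-2}$. First I would use the multiplicativity of $\chi$ to write the summand as $\chi(1-x^2)\chi(1-y^2)\chi(x^2-y^2)$. Since $\chi(m)=0$ whenever $p\mid m$, every pair $(x,y)\in X^c$ for which $p$ divides one of $1-x^2$, $1-y^2$, $x^2-y^2$ contributes $0$. Hence the only surviving contributions come from pairs with $p\nmid 1-x^2,\,1-y^2,\,x^2-y^2$ but with $p\mid x$ or $p\mid y$.

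Next I would observe that these two remaining cases are disjoint: if $p\mid x$ and $p\mid y$ simultaneously then $p\mid x^2-y^2$, which has been excluded. Moreover the summand is invariant under the interchange $x\leftrightarrow y$ (using $\chi(-1)=1$ to absorb the sign from $y^2-x^2$), and the set $X$ is symmetric, so it suffices to evaluate the contribution of the pairs with $p\mid x$ and double it. For such a pair, the key tool is Lemma \ref{3}: since $\chi(z)$ depends only on $z\bmod p$ and $x^2\equiv 0\pmod p$, we get $\chi(1-x^2)=\chi(1)=1$ and $\chi(x^2-y^2)=\chi(-y^2)=\chi(-1)\chi(y)^2=1$ for $p\nmid y$. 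The requirement $p\nmid x^2-y^2\equiv -y^2$ is precisely $p\nmid y$, and the excluded residues $y\equiv\pm1\pmod p$ satisfy $p\mid 1-y^2$ and so contribute $0$ anyway.

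Thus the inner sum collapses to $\sum_{y\in\mathbb{Z}_n^{\ast}}\chi(1-y^2)$, which equals $\sum_{y\in\mathbb{Z}_n^{\ast}}\chi(y^2-1)=-2p^{\alpha-1}$ by Lemma \ref{4} with $a=1$ (again using $\chi(-1)=1$). Summing over the $p^{\alpha-1}$ values of $x$ with $p\mid x$ gives $-2p^{2\alpha-2}$, and doubling for the symmetric case $p\mid y$ yields $S-S_0=-4p^{2\alpha-2}$, that is, $S_0=S+4p^{2\alpha-2}$. The main obstacle here is not computational but a matter of careful bookkeeping: one must correctly isolate which pairs of $X^c$ actually survive, namely those in which exactly one of $x,y$ is divisible by $p$ while none of the three factors is, and then recognize that Lemma \ref{3} flattens the two ``boundary'' factors to $1$, leaving a single one-variable character sum that Lemma \ref{4} evaluates in closed form.
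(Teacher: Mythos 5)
Your proof is correct and follows essentially the same route as the paper: both identify the difference $S-S_0$ as coming exactly from the pairs in which precisely one of $x,y$ is divisible by $p$, flatten the factors $\chi(1-x^2)$ and $\chi(x^2-y^2)$ to $1$ via Lemma \ref{3}, and evaluate the surviving inner sum $\sum_{y\in\mathbb{Z}_n^{\ast}}\chi(1-y^2)=-2p^{\alpha-1}$ via Lemma \ref{4} with $a=1$, giving $-2p^{2\alpha-2}$ per case. The only cosmetic difference is that you exploit the $x\leftrightarrow y$ symmetry (valid since $\chi(-1)=1$) to compute one case and double it, whereas the paper writes out a six-term decomposition and evaluates the two nonzero contributions $T_1$ and $T_3$ separately, with $T_2=T_4=T_5=0$ playing the role of your observation that the other pairs in the complement contribute nothing.
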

\begin{proof}
	We have
	$$ S=\sum_{x\in\mathbb{Z}_n} \sum_{y\in\mathbb{Z}_n} \chi((1-x^2)(1-y^2) (x^2-y^2)).$$
	Breaking the sum $\sum_{x\in\mathbb{Z}_n} \sum_{y\in\mathbb{Z}_n}$ as
 \begin{align*}
	\mathop{\sum\limits \sum\limits}_{(x,y)\in X}+ \sum_{p\mid x}\sum_y+ \sum_{\substack{p\nmid x\\p\mid 1-x^2}}\sum_y+ \sum_{\substack{p\nmid x\\p\nmid 1-x^2}}\sum_{p\mid y}+ \sum_{\substack{p\nmid x\\p\nmid 1-x^2}}\sum_{\substack{p\nmid y\\p\mid 1-y^2}}+ \mathop{\sum_{\substack{p\nmid x\\p\nmid 1-x^2}} \sum_{\substack{p\nmid y\\p\nmid 1-y^2}}}_{\substack{p\mid x^2-y^2}},
	\end{align*} we write
\begin{align}\label{e40}
	S&=S_0+T_1+T_2+T_3+T_4+T_5.
	\end{align}
	We use Lemma \ref{3} and employ Lemma \ref{4} with $a=1$ to evaluate the $T_i$'s. We have
	\begin{align}\label{e18}
	T_1 &=\sum\limits_{p\mid x}\sum\limits_y \chi(1-x^2)\chi((1-y^2)(x^2-y^2))\notag\\
	&=\sum_{t=1}^{p^{\alpha-1}}\sum_y \chi((1-y^2)(p^2 t^2-y^2))\notag \\
	&=\sum_{t=1}^{p^{\alpha-1}}\sum_{p\mid y}\chi(1-y^2)\chi (p^2 t^2-y^2)~+~\sum_{t=1}^{p^{\alpha-1}}\sum_{p\nmid y} \chi(1-y^2)\chi (p^2 t^2-y^2)\notag \\
	&=\sum_{t=1}^{p^{\alpha-1}}\sum_{p\nmid y} \chi(1-y^2)\notag \\
	&=-2p^{2\alpha-2}.
	\end{align}
	Also,
	\begin{align}\label{e19}
	T_3 &=\sum\limits_{\substack{p\nmid x\\p\nmid 1-x^2}}\sum\limits_{p\mid y}\chi(1-x^2) \chi(1-y^2)\chi(x^2-y^2)\notag\\
	&=\sum\limits_{\substack{p\nmid x\\p\nmid 1-x^2}}\sum_{t=1}^{p^{\alpha-1}}\chi(1-x^2)\chi(x^2-p^2 t^2)\notag \\
	&=\sum\limits_{\substack{p\nmid x\\p\nmid 1-x^2}}\sum_{t=1}^{p^{\alpha-1}}\chi(1-x^2)\notag\\
	&=-2p^{2\alpha-2}.
	\end{align}
	It is easy to see that 
	\begin{equation}\label{e20}
	T_2=T_4=T_5=0.
	\end{equation}
	Using \eqref{e18}, \eqref{e19} and \eqref{e20}, \eqref{e40} yields the required result.
\end{proof}
\begin{lemma}\label{10}
 For a positive integer $\alpha$, let $n=p^\alpha$, where $p$ is a prime satisfying $p\equiv 1\pmod{4}$. Let $\chi$ be the quadratic character mod $n$ and let $\psi$ be a character mod $n$ of order $4$; let $J(\psi, \chi)=\sum\limits_{x\in \mathbb{Z}_n} \psi(x) \chi(1-x)$ be the Jacobi sum of $\psi$ and $\chi$. If $$K=\sum_{x\in\mathbb{Z}_n^{\ast}} \sum_{y\in\mathbb{Z}_n^{\ast}}\chi((1-x)(1-y)(y-x)xy),$$
	then $K=J(\psi,\chi)^2+\overline{J(\psi,\chi)}^2.$
\end{lemma}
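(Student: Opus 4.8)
The plan is to evaluate $K$ in three stages: symmetrize the double sum, descend from modulus $p^\alpha$ to modulus $p$, and then match the resulting prime-field sum against the two Jacobi sums.

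First I would symmetrize. Since $x,y$ range over units, for each fixed $x$ the substitution $y=xz$ with $z\in\mathbb{Z}_n^{\ast}$ is a bijection, and using $\chi(-1)=1$ together with $\chi(u)^3=\chi(u)$ for a unit $u$ one obtains
\[ K=\sum_{x\in\mathbb{Z}_n^{\ast}}\sum_{z\in\mathbb{Z}_n^{\ast}}\chi(x)\chi(z)\chi(1-x)\chi(1-z)\chi(1-xz). \]
In parallel, writing $J(\psi,\chi)^2=\sum_{x,y}\psi(xy)\chi(1-x)\chi(1-y)$ and applying the same substitution together with $\psi^2=\chi$ (so that $\psi(x)^2=\chi(x)$) gives $J(\psi,\chi)^2=\sum_{x,z}\chi(x)\chi(1-x)\psi(z)\chi(1-xz)$, and likewise $\overline{J(\psi,\chi)}^2=\sum_{x,z}\chi(x)\chi(1-x)\overline{\psi}(z)\chi(1-xz)$. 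Hence
\[ J(\psi,\chi)^2+\overline{J(\psi,\chi)}^2=\sum_{x,z}\chi(x)\chi(1-x)\chi(1-xz)\bigl(\psi(z)+\overline{\psi}(z)\bigr). \]
So $K$ and $J(\psi,\chi)^2+\overline{J(\psi,\chi)}^2$ share the same kernel $\chi(x)\chi(1-x)\chi(1-xz)$, and the assertion is precisely that pairing this kernel with the weight $\chi(z)\chi(1-z)$ gives the same value as pairing it with $\psi(z)+\overline{\psi}(z)$.

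Next I would reduce the modulus. Both $\chi$ and $\psi$ are induced from characters modulo $p$: $\chi$ by Lemma \ref{3}, and $\psi$ because its order $4$ divides $p-1$ and is coprime to $p$, so $\psi$ is trivial on $1+p\mathbb{Z}_{p^\alpha}$ and therefore $p$-periodic as well. Consequently every factor of the summand of $K$ depends only on the residues of $x,y$ modulo $p$, and the coupling factor $\chi(y-x)$ vanishes exactly when $x\equiv y\pmod p$. Grouping the pairs $(x,y)$ by their residues mod $p$ (each residue has $p^{\alpha-1}$ lifts) yields $K=p^{2\alpha-2}K_1$, where $K_1$ is the identical sum taken over $\mathbb{F}_p$ with the quadratic character $\eta$ of $\mathbb{F}_p^{\ast}$. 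The same periodicity gives $J(\psi,\chi)=p^{\alpha-1}J(\tilde\psi,\eta)$, where $\tilde\psi$ is a quartic character of $\mathbb{F}_p^{\ast}$. Since $(p^{\alpha-1})^2=p^{2\alpha-2}$, the identity for general $\alpha$ reduces to the case $\alpha=1$, namely $K_1=J(\tilde\psi,\eta)^2+\overline{J(\tilde\psi,\eta)}^2$; this descent is what produces the factor $p^{2\alpha-2}$ visible in Theorem \ref{t2}.

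Finally, over $\mathbb{F}_p$ I would prove the prime-field identity by evaluating both sides in terms of the representation $p=a^2+b^2$. For the right-hand side this is classical quartic Gauss/Jacobi sum theory: $|J(\tilde\psi,\eta)|^2=p$ and $J(\tilde\psi,\eta)$ equals, up to a root of unity, $a+bi$, so that $J(\tilde\psi,\eta)^2+\overline{J(\tilde\psi,\eta)}^2$ is an explicit integer of the form $4a^2-2p$. For the left-hand side one evaluates the purely quadratic double sum $K_1$ by the Evans--Pulham--Sheehan method, peeling off the inner $\mathbb{F}_p$-sums with the $\alpha=1$ instances of Lemma \ref{4} and Lemma \ref{6} and collecting terms, to reach the same value. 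I expect this last prime-field step to be the main obstacle: it is the one genuinely arithmetic point where a sum assembled solely from the quadratic character $\eta$ must be shown to reproduce the quartic Jacobi sum, and it is exactly here that $p=a^2+b^2$ (equivalently, the quartic Gauss sum) has to enter. Everything before it---the symmetrization and the descent from $p^\alpha$ to $p$---is formal and, in particular, shows that the genuine content of the lemma is already concentrated in the case $\alpha=1$.
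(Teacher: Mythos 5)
Your first two stages are correct, and the descent in stage two is a genuinely different (and cleaner) structural idea than the paper's. Since $\chi$ is $p$-periodic on units (Lemma \ref{3}) and $\psi$ is trivial on the kernel of reduction mod $p$ (that kernel has order $p^{\alpha-1}$, coprime to the order $4$ of $\psi$), every summand of $K$ and of $J(\psi,\chi)$ depends only on residues mod $p$, giving $K=p^{2\alpha-2}K_1$ and $J(\psi,\chi)=p^{\alpha-1}J(\tilde\psi,\eta)$, so the lemma is indeed equivalent to its $\alpha=1$ case. The paper does not do this: it reruns the whole Evans--Pulham--Sheehan computation at level $p^\alpha$, and the equivalence-class argument it uses to justify the substitution $t=(x-1)/y$, $u=(y-1)/x$ is precisely the bookkeeping your descent makes unnecessary.

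The gap is in stage three, which after your reduction is where all the content lives, and your sketch of it would fail as stated. You propose to evaluate $K_1$ by ``peeling off the inner $\mathbb{F}_p$-sums with the $\alpha=1$ instances of Lemma \ref{4} and Lemma \ref{6} and collecting terms.'' Those lemmas evaluate sums of $\chi$ at \emph{quadratic} arguments, whereas the inner sum of $K_1$ at fixed $x$ is $\sum_{y}\eta\bigl(y(1-y)(y-x)\bigr)$, a character sum of a \emph{cubic} in $y$; no rearrangement reduces it to Lemmas \ref{4} and \ref{6}. (If $K_1$ were computable from quadratic-argument sums alone, the quartic character would never need to appear; in fact such cubic sums encode point counts of elliptic curves and are not elementary.) What actually closes the argument --- both in \cite{evans1981number} and in the paper's proof at level $p^\alpha$ --- is (i) the substitution $t=(x-1)/y$, $u=(y-1)/x$, which turns the double sum into $\sum_{t}\chi(t(t-1))\sum_{u}\chi(u(u^2-t))$, and then (ii) the evaluation of the Jacobsthal-type sum $\sum_{u}\chi(u(u^2-t))=\overline{\psi}(t)J(\psi,\chi)+\psi(t)J(\overline{\psi},\chi)$ via Theorem 2.7 of \cite{berndt1979sums}; this is exactly the point where the quartic character enters. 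Your fallback of computing both sides as integers through $p=a^2+b^2$ has the same circularity: proving $K_1=4a^2-2p$ requires Jacobsthal's evaluation (equivalently, the very Jacobi-sum identity being proved). So the proposal defers, rather than supplies, the key step; combined either with the prime-case identity from \cite{evans1981number} or with steps (i)--(ii) above, your descent would yield a complete and arguably nicer proof.
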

\begin{proof}
	The proof goes along similar lines as in \cite{evans1981number}. 
	For $x\in\mathbb{Z}_n^{\ast}$, we use the notations $\frac{1}{x}$ or $x^{-1}$ to refer to the multiplicative inverse of $x$ in $\mathbb{Z}_n^{\ast}$. We have
	\begin{align*}
	K&=\mathop{\sum_{\substack{p\nmid x, 1-x}}~~\sum_{\substack{p\nmid y, 1-y}} }_{p\nmid x-y}\chi((1-x)(1-y)(y-x)xyx^{-2}y^{-2})\\
	&=\mathop{\sum_{\substack{p\nmid x, x-1}}~~\sum_{\substack{p\nmid y, y-1}} }_{p\nmid x-y}\chi\left( \left( (x-1)y^{-1}\right) \left( (y-1)x^{-1}\right) (y-x)\right).
	\end{align*}
	We break the sum into two parts. One part deals with the case when $p\mid x+y-1$ and the other part deals with the case when $p\nmid x+y-1$. First we evaluate the part when $p\mid x+y-1$. Let $x+y-1=tp$ for some $k\in\mathbb{Z}, 1\leq t\leq p^{\alpha-1}$. Then $\chi((x-1)y^{-1})=\chi(-1+tpy^{-1})=\chi(-1)=1$ by Lemma \ref{3}. Similarly $\chi((y-1)x^{-1})=1$ and $\chi(y-x)=\chi(2x-1)$. Using these, we get
	\begin{align*}
	 &\mathop{\sum_{\substack{p\nmid x,x-1}}~~\sum_{p\nmid y,y-1}}_{\substack{p\nmid x-y\\p\mid x+y-1}} \chi((x-1)y^{-1})\chi((y-1)x^{-1})\chi(y-x)\\
	 &=\sum_{\substack{p\nmid x,x-1,2x-1}}\sum_{t=1}^{p^{\alpha-1}}\chi(2x-1)\\
	&=p^{\alpha-1}\sum_{\substack{p\nmid x,x-1,2x-1}}\chi(2x-1)\\
	&=p^{\alpha-1}\left[ 0-\sum_{p\mid x}\chi(2x-1)-\sum_{\substack{p\nmid x\\p\mid x-1}}\chi(2x-1)\right] \\
	&=-2 p^{2\alpha-2},
	\end{align*}
	where we have used Lemma \ref{3} as required. So $K$ is reduced to the following expression.
	\begin{equation}\label{e6}
	K=-2 p^{2\alpha-2}+\mathop{\sum_{\substack{p\nmid x,x-1}} ~~\sum_{p\nmid y,y-1}}_{\substack{p\nmid x-y \\p\nmid x+y-1}} \chi\left( \frac{x-1}{y}\right) \chi\left( \frac{y-1}{x}\right) \chi(y-x).
	\end{equation}
	Now, we use the substitution $t=\frac{x-1}{y}$ and $u=\frac{y-1}{x}$ in the above sum. Let $$U= \mathop{\sum_{\substack{p\nmid x,x-1}} ~~\sum_{p\nmid y,y-1}}_{\substack{p\nmid x-y \\p\nmid x+y-1}} \chi\left( \frac{x-1}{y}\right) \chi\left( \frac{y-1}{x}\right) \chi(y-x)$$ and $$V=\mathop{\sum_{\substack{p\nmid t,t+1}} ~~\sum_{p\nmid u,u+1}}_{\substack{p\nmid u-t \\p\nmid ut-1}}\chi(tu(u-t)(ut-1)).$$ Then we show that $U=V$ in what follows, which will imply that the substitution $t=\frac{x-1}{y}$ and $u=\frac{y-1}{x}$ is possible.  
	It is easy to see that 
	\begin{align*}
	&\left\lbrace \left( \dfrac{x-1}{y}, \dfrac{y-1}{x}\right):(x,y)\in \mathbb{Z}_n\times\mathbb{Z}_n \text{ and } p \nmid x,x-1,y,y-1,x-y, x+y-1 \right\rbrace \\
	=&\{(t,u):(t,u)\in \mathbb{Z}_n\times\mathbb{Z}_n \text{ and } p \nmid t,t+1,u,u+1,u-t, ut-1\}.
	\end{align*}Let 
	\begin{align*}
	U_1&=\{ (x,y)\in\mathbb{Z}_n\times\mathbb{Z}_n\mid p \nmid x,x-1,y,y-1,x-y, x+y-1\};\\ 
	V_1&=\{ (t,u)\in\mathbb{Z}_n\times\mathbb{Z}_n\mid p \nmid t,t+1,u,u+1,u-t, ut-1\} .
	\end{align*}
	Then $$U=\mathop{\sum\sum}_{(x,y)\in U_1}\chi\left( \frac{x-1}{y}\right) \chi\left( \frac{y-1}{x}\right) \chi(y-x) ~\text{ and }~ V=\mathop{\sum\sum}_{(t,u)\in V_1}\chi(tu(u-t)(ut-1)).$$
	We note that $|U_1|=|V_1|=(p-3)^2p^{2\alpha-2}.$ Let us define an equivalence relation on $U_1$ as $$(x,y)\thicksim(x',y') ~~\text{ if and only if }~~ x\equiv x'\pmod{p} ~\text{ and }~y\equiv y'\pmod{p}$$
	and similarly we define an equivalence relation on $V_1$ as  $$(t,u)\thicksim_1(t',u') ~~\text{ if and only if }~~ t\equiv t'\pmod{p}~\text{ and }~u\equiv u'\pmod{p}.$$
	Then 
	\begin{equation}\label{e11}
	U=\mathop{\sum\sum}_{(x,y)\in U_1}(p^{\alpha-1})^2 ~\chi\left( \frac{x-1}{y}\right) \chi\left( \frac{y-1}{x}\right) \chi(y-x),
	\end{equation}
where the summation is over distinct equivalence class representatives corresponding to the equivalence relation $\thicksim$, because each equivalence class contains $(p^{\alpha-1})^2$ elements and for $(x,y)$ and $(x',y')$ in the same equivalence class, $$\chi\left( \frac{x-1}{y}\right) \chi\left( \frac{y-1}{x}\right) \chi(y-x) =\chi\left( \frac{x'-1}{y'}\right) \chi\left( \frac{y'-1}{x'}\right) \chi(y'-x') .$$
	Again, let $(t,u)\in V_1$. Then for any $(t',u')\in V_1$ which belongs to  the equivalence class of $(t,u)$ corresponding to the equivalence relation $\thicksim_1$, $\chi(tu(u-t)(ut-1))=\chi(t'u'(u'-t')(u't'-1)).$ So
	\begin{equation}\label{e41}
	V=\mathop{\sum\sum}_{(t,u)\in V_1}(p^{\alpha-1})^2 ~\chi(tu(u-t)(ut-1)),
	\end{equation}  where the summation is over distinct equivalence class representatives corresponding to the equivalence relation $\thicksim_1$, similar to  \eqref{e11}. 
	\par Now, for a class representative $(x,y)\in U_1$ corresponding to the equivalence relation $\thicksim$, $\chi\left( \frac{x-1}{y}\right) \chi\left( \frac{y-1}{x}\right) \chi(y-x)=\chi(tu(u-t)(ut-1))$ for some $(t,u)\in V_1.$ So 
	\begin{equation}\label{e42}
	(p^{\alpha-1})^2 ~\chi\left( \frac{x-1}{y}\right) \chi\left( \frac{y-1}{x}\right) \chi(y-x) =(p^{\alpha-1})^2 \chi(tu(u-t)(ut-1)).
	\end{equation}
	 What remains to be seen is that if $(x,y)$ and $(x_1,y_1)\in U_1$ are in different equivalence classes of the equivalence relation $\thicksim$, $(t,u)$ and $(t_1,u_1)\in V_1$ are in different equivalence classes of the equivalence relation $\thicksim_1$ as well, where $t=\frac{x-1}{y},u=\frac{y-1}{x},t_1=\frac{x_1-1}{y_1},u_1=\frac{y_1-1}{x_1}.$ But this is immediate since $(x,y)\not\thicksim (x_1,y_1)$  implies that $(t,u)\not\thicksim (t_1,u_1)$. Thus combining \eqref{e11}, \eqref{e41} and \eqref{e42} we have proved that $U=V$.
	So we have from \eqref{e6}, 
	\begin{align}\label{e7}
	K&=-2p^{2\alpha-2}+\mathop{\sum_{\substack{p\nmid t,t+1}} ~~\sum_{p\nmid u,u+1}}_{\substack{p\nmid u-t \\p\nmid ut-1}} \chi(tu(u-t)(ut-1))\notag \\
	&=\mathop{\sum_{\substack{p\nmid t}}\sum_{p\nmid u}}_{\substack{p\nmid u-t \\p\nmid ut-1}} \chi(tu(u-t)(ut-1))\notag \\
	&=\sum_{\substack{p\nmid t,t-1}}\chi(t(t-1))\sum_{p\nmid u,u^2-t} \chi(u(u^2-t)),
	\end{align}
	where the last summation is obtained by the substitution $t\mapsto \frac{t}{u}$. The sum in \eqref{e7} indexed by $u$ remains to be reduced, and following similar approach as given in \cite{evans1981number}, we use Theorem 2.7 of \cite{berndt1979sums}. Then for each $t$ in the sum indexed by $t$ in \eqref{e7}, we have
	\begin{align}\label{e21}
	\sum_{p\nmid u,u^2-t} \chi(u(u^2-t))
	&=\sum_{p\nmid u,u^2-t} \psi^2(u)\psi^2(u^2-t)\notag \\
	&=\sum_{u\in\mathbb{Z}_n^{\ast}} \psi(u^2)\psi^2(u^2-t)\notag \\
	&=\sum_{u\in\mathbb{Z}_n^{\ast}} \psi(u)\psi^2(u-t)(1+\chi(u))\notag \\&=\sum_{u\in\mathbb{Z}_n^{\ast}} \psi(u)\psi^2(u-t)+\sum_{u\in\mathbb{Z}_n^{\ast}} \psi\chi(u)\psi^2(u-t).
	\end{align}
	The first sum in the above is 
	\begin{align}\label{e22}
		\sum\limits_{u\in\mathbb{Z}_n^{\ast}} \psi(u)\psi^2(u-t)&=\sum\limits_{u\in\mathbb{Z}_n^{\ast}} \psi(ut)\psi^2(ut-t)\notag \\
		&=\overline{\psi}(t)\sum\limits_{u\in\mathbb{Z}_n^{\ast}}\psi(u)\psi^2(u-1)\notag \\
		&=\overline{\psi}(t) J(\psi,\psi^2)\notag\\
		&=\overline{\psi}(t) J(\psi,\chi).
		\end{align}
	Similarly we simplify the second sum in \eqref{e21} and obtain 
	\begin{align}\label{new-1}
	\sum_{u\in\mathbb{Z}_n^{\ast}} \psi\chi(u)\psi^2(u-t)=\psi(t)J(\overline{\psi}, \chi).
	\end{align} 
	Combining \eqref{e21}, \eqref{e22}, \eqref{new-1}, and \eqref{e7}, we have
	$$K=\sum_{p\nmid t,t-1}\chi(t(t-1))[\bar{\psi}(t)J(\psi,\chi)+\psi(t)J(\overline{\psi},\chi)]$$ and hence
	$ K=J(\psi,\chi)^2+\overline{J(\psi,\chi)}^2$. This completes the proof of the lemma.
\end{proof}
Finally we prove our main results for $n=p^\alpha$ where $p$ is a prime such that $p\equiv 1\pmod{4}$ and $\alpha\in\mathbb{N}$. Since $\mathbb{Z}_{p^\alpha}^{\ast}$ is cyclic, there exists a unique character modulo $p^\alpha$ of order $2$; let us call it $\chi$. Let $\psi$ be a character mod $p^\alpha$ of order $4$. We already know from Proposition \ref{prop3} that $G_{p^\alpha}$ is vertex-transitive. Let $H$ be the subgraph of $G_{p^\alpha}$ induced by the set of squares in $\mathbb{Z}_{p^\alpha}^{\ast}$. Then $H$ is also vertex transitive. 	
\par It is known from Proposition \ref{prop4} that $G_{p^\alpha}$ is regular; we see that the same can be deduced using the character sum given below. Let $a\in\mathbb{Z}_{p^\alpha}^{\ast},$ then
$$\text{deg}(a)=\sum\limits_{\substack{b=0\vspace{0.1cm}\\ a-b\in \mathbb{Z}_{p^\alpha}^{\ast}}}^{n-1} \dfrac{1+\chi(a-b)}{2},$$ where $\text{deg}(a)$ denotes the degree of the vertex $a$ in $G_{p^\alpha}$. 
It is easy to calculate this sum and see that $\text{deg}(a)=\dfrac{\phi(p^\alpha)}{2}$ which agrees with the proposition.
\begin{proof}[Proof of Theorem \ref{t1}]
Using the property of vertex transitivity for the graphs $G_{p^\alpha}$ and $H$, we see that 
\begin{align}\label{e23}
\mathcal{K}_3(G_{p^\alpha})&= \frac{1}{3} p^\alpha\times \text{ number of triangles in }G_{p^\alpha} \text{ containing }0\notag\\
&= \frac{p^\alpha}{3}\times \text{ number of edges in } H
\end{align}
 and
 \begin{align}\label{e24}
 \text{ number of edges in } H=\frac{1}{2}\times \frac{\phi(p^\alpha)}{2}\times \text{ number of edges in }H \text{ containing }1.
 \end{align} 
 So, combining \eqref{e23} and \eqref{e24} we find that 
\begin{align}\label{e8}
\mathcal{K}_3(G_{p^\alpha})&=\dfrac{p^\alpha\phi(p^\alpha)}{12}\times \text{ number of edges in } H \text{ containing } 1\notag \\
&=\dfrac{p^\alpha\phi(p^\alpha)}{12} \times \sum\limits_{\substack{x\in \mathbb{Z}_{p^\alpha}\\x,x-1\in\mathbb{Z}_{p^\alpha}^{\ast}}} \left(\frac{1+\chi(x)}{2}\right)\left(\frac{1+\chi(1-x)}{2}\right).
\end{align}
Lemma \ref{3} yields
\begin{equation}\label{e25}
\sum\limits_{\substack{x\in \mathbb{Z}_{p^\alpha}\\x,x-1\in\mathbb{Z}_{p^\alpha}^{\ast}}} 1 =p^{\alpha -1}(p-2)
\end{equation}
and
\begin{align}\label{e26}
\sum\limits_{\substack{x\in \mathbb{Z}_{p^\alpha}\\x,x-1\in\mathbb{Z}_{p^\alpha}^{\ast}}} \chi(x)&=\sum\limits_{x\in \mathbb{Z}_{p^\alpha}^{\ast}} \chi(x)-\sum\limits_{\substack{x\in \mathbb{Z}_{p^\alpha}^{\ast}\\p\mid x-1}}\chi(x)\notag\\
&=-\left[\chi(1)+ \chi(p+1)+\chi(2p+1)+\cdots +\chi((p^{\alpha-1}-1)p+1)\right]\notag \\
&=-p^{\alpha-1}.
\end{align}
Similarly, we have
\begin{equation}\label{e27}
\sum\limits_{\substack{x\in\mathbb{Z}_{p^\alpha}\\x,x-1\in\mathbb{Z}_{p^\alpha}^{\ast}}} \chi(1-x)=-p^{\alpha-1} 
\end{equation} and
\begin{align}\label{ee28}
\sum\limits_{\substack{x\in \mathbb{Z}_{p^\alpha}\\x,x-1\in\mathbb{Z}_{p^\alpha}^{\ast}}}\chi(x(x-1)) 
&=\sum\limits_{p\nmid x}\chi(x(x-1))\chi(x^{-2})\notag\\ 
&=\sum\limits_{p\nmid x}\chi(1-x^{-1})\notag\\
&=\sum\limits_{p\nmid x}\chi(1-x)\notag  \\
&=-p^{\alpha-1}. 
\end{align}
Combining \eqref{e25}, \eqref{e26}, \eqref{e27} and \eqref{ee28}, \eqref{e8} yields 
$$\mathcal{K}_3(G_{p^\alpha})=\dfrac{p^\alpha p^{\alpha-1}(p-1)}{12}\times\dfrac{1}{4} (p-5)p^{\alpha-1},$$
completing the proof of the theorem.
\end{proof}
\begin{proof}[Proof of Theorem \ref{t2}]
We shall proceed in a similar fashion as in  \cite{evans1981number}. Analogous to the beginning of the proof of Theorem \ref{t1}, we have
\begin{equation}\label{e28}
\mathcal{K}_4(G_{p^\alpha})=\dfrac{p^\alpha\times \text{ number of } 4\text{-order cliques containing }0}{4}=\dfrac{p^\alpha \times \mathcal{K}_3(H)}{4}
\end{equation}
 and
 \begin{equation}\label{e29}
 \mathcal{K}_3(H)=\dfrac{p^{\alpha-1}}{3}\left( \dfrac{p-1}{2}\right) \times \text{ number of }3\text{-order cliques in } H \text{ containing } 1.
 \end{equation}
Let $f(p^\alpha)$ be the number of $3$-order cliques in $H$ containing $1$.
Then combining \eqref{e28} and \eqref{e29} we have
\begin{equation}\label{eq1}
\mathcal{K}_4(G_{p^\alpha})=\dfrac{p^{2\alpha-1}(p-1)f(p^\alpha)}{24},
\end{equation}
so we are left to evaluate $f(p^\alpha).$
Let $$X=\{(x,y)\in \mathbb{Z}_{p^\alpha} \times \mathbb{Z}_{p^\alpha} :  p \nmid x,y,1-x^2,1-y^2, x^2-y^2 \} .$$ Then 
$$f(p^\alpha)=\dfrac{1}{8}\times |\{ (x,y)\in X : \chi(1-x^2)=\chi(1-y^2)=\chi(x^2-y^2)=1 \} |.$$
Let $X=A_1\cup A_2\cup \cdots \cup A_8$, where $A_1, A_2, \ldots, A_8$ are as given in the following table.
\begin{table}[h!]
	\begin{center}
		\begin{tabular}{ l | c | c | c }
			Subset of $X$ & $\chi(1-x^2)$ & $\chi(1-y^2)$ & $\chi(x^2-y^2)$\\  
			\hline
			$A_1$ & 1 & 1 & 1\\ 
			$A_2$ & 1 & 1 & $-1$\\ 
			$A_3$ & 1 & $-1$ & 1\\ 
			$A_4$ & 1 & $-1$ & $-1$\\ 
			$A_5$ & $-1$ & 1 & 1\\ 
			$A_6$ & $-1$ & 1 & $-1$\\ 
			$A_7$ & $-1$ & $-1$ & 1\\ 
			$A_8$ & $-1$ & $-1$ & $-1$\\ 
		\end{tabular}
	\end{center}
\end{table}
For example, $A_2$ is the subset of $X$ where $\chi(1-x^2)=1, \chi(1-y^2)=1$ and $\chi(x^2-y^2)=-1$. Let $\beta_i=|A_i|, i=1,2,\ldots ,8.$ Our objective is to find $\beta_1$ since we have $f(p^\alpha)=\frac{1}{8}\beta_1$. To find $\beta_1$, we employ some relations between the $\beta_i$'s which we proceed to find out, similar to what is done in \cite{evans1981number}. Now,
\begin{align*}
&A_1\cup A_2\\
&=\{ (x,y) \in \mathbb{Z}_{p^\alpha}\times \mathbb{Z}_{p^\alpha}: p \nmid x,1-x^2,y,1-y^2, x^2-y^2;\chi(1-x^2)=1;\chi(1-y^2)=1 \}.
\end{align*}
 Using Lemma \ref{5}, we find that
\begin{equation}\label{e30}
 A:=|A_1\cup A_2|=p^{2\alpha-2}\left(  \dfrac{p-5}{2}\right) \left( \dfrac{p-9}{2}\right).
\end{equation}
Again, we have 
\begin{align*}
&A_3\cup A_4\\
&=\{ (x,y) \in \mathbb{Z}_{p^\alpha}\times \mathbb{Z}_{p^\alpha} : p \nmid x,1-x^2,y,1-y^2, x^2-y^2;\chi(1-x^2)=1;\chi(1-y^2)=-1\} .
\end{align*} 
We calculate the cardinality of $A_3\cup A_4$ in the following manner. The total number of $y\in\mathbb{Z}_{p^\alpha}^{\ast}$ such that $\chi(1-y^2)=\pm 1$ is the number of $y\in\mathbb{Z}_{p^\alpha}^{\ast}$ such that $p\nmid y^2-1$, which is $p^\alpha-2p^{\alpha-1}=p^{\alpha-1}(p-2)$. Out of them, the number of $y\in\mathbb{Z}_{p^\alpha}^{\ast}$ such that  $\chi(1-y^2)=1$ is $\frac{p^{\alpha-1}(p-5)}{2}$ by Lemma \ref{5}. So the remaining number of $y\in\mathbb{Z}_{p^\alpha}^{\ast}$ is $p^{\alpha-1}(p-2)-\frac{p^{\alpha-1}(p-5)}{2}- \gamma$, where $\gamma$ is the number of $y$ such that $\chi(1-y^2)=1$  and $p\mid y$. Clearly, $\gamma=p^{\alpha-1}$. Hence 
 \begin{equation}\label{e31}
 B:=|A_3\cup A_4|=p^{2\alpha-2}\left(  \dfrac{p-5}{2}\right) \left( \dfrac{p-1}{2}\right).
 \end{equation}
  Recalling \eqref{e30} and \eqref{e31} we get $$\beta_1+\beta_2=A$$ 
$$\beta_3+\beta_4=B.$$
We define a bijection from $A_1\cup A_3$ to $A_1\cup A_2$ as
$$(x,y)\mapsto (x,x y^{-1}).$$
Then $\beta_1+\beta_3=\beta_1+\beta_2$. In a similar fashion we get other relations among the $\beta_i$'s in terms of $A$ and $B$ (as named in \eqref{e30} and \eqref{e31}). In particular, we have
\begin{align}\label{e45}
\beta_1+\beta_2=A;\notag\\
\beta_1+\beta_3=A;\notag\\
\beta_3+\beta_4=B;\notag\\
\beta_1+\beta_5=A;\notag\\
\beta_2+\beta_6=B;\notag\\
\beta_5+\beta_7=B;\notag\\
\beta_7+\beta_8=B.
\end{align}
Let $$S=\sum_{x\in\mathbb{Z}_{p^\alpha}}\sum_{y\in\mathbb{Z}_{p^\alpha}} \chi((1-x^2)(1-y^2)(x^2-y^2))$$ and 
$$S_0=\mathop{\sum \sum}_{\substack{(x,y)\in X}} \chi((1-x^2)(1-y^2)(x^2-y^2)).$$
Using \eqref{e45} we have
\begin{align}\label{e32}
S_0&=\mathop{\sum \sum}_{\substack{(x,y)\in A_1}}\chi((1-x^2)(1-y^2)(x^2-y^2))+\cdots+\mathop{\sum \sum}_{\substack{(x,y)\in A_8}}\chi((1-x^2)(1-y^2)(x^2-y^2))\notag\\
&=\beta_1-\beta_2-\beta_3+\beta_4-\beta_5+\beta_6+\beta_7-\beta_8\notag\\
&=\beta_1-(A-\beta_1)-(A-\beta_1)+(B-\beta_3)-(A-\beta_1)+(B-\beta_2)+(B-\beta_5)-(B-\beta_7)\notag\\
&=64 f(p^\alpha)+p^{2\alpha-2}(p-5)(15-p).
\end{align}
Using Lemma \ref{9} and \eqref{e32} we have
\begin{equation}\label{eq2}
f(p^\alpha)=\dfrac{S+p^{2\alpha-2}(p^2-20p+79)}{64}.
\end{equation}
Now, \begin{align}\label{e9}
S&=\sum_{p\mid x}\sum_{y}\chi((1-x^2)(1-y^2)(x^2-y^2))+\sum_{p\nmid x}\sum_{y}\chi((1-x^2)(1-y^2)(x^2-y^2)).
\end{align}
The first term in \eqref{e9} is
\begin{align*}
&\sum_{p\mid x}\sum_y \chi(1-x^2)\chi((1-y^2)(x^2-y^2))\\
=& \sum_{p\mid x}\sum_{p\mid y} \chi(1-y^2)\chi(x^2-y^2)+\sum_{p\mid x}\sum_{p\nmid y} \chi(1-y^2)\chi(x^2-y^2)\\
=& \sum_{p\mid x}\sum_{p\nmid y} \chi(1-y^2)\chi(x^2-y^2) \\
=&\sum_{p\mid x}\sum_{p\nmid y} \chi(1-y^2)\\
=&-2p^{2\alpha-2}.
\end{align*} So \eqref{e9} yields 
\begin{align}\label{e33}
S&=-2p^{2\alpha-2}+\sum_{p\nmid x}\sum_{ y} \chi((1-x^2)(1-y^2)(x^2-y^2))\notag\\
&=-2p^{2\alpha-2}+\sum_{p\nmid x}\sum_{p\mid y}\chi((1-x^2)(1-y^2)(x^2-y^2))\notag\\
&+\sum_{p\nmid x}\sum_{p\nmid y}\chi((1-x^2)(1-y^2)(x^2-y^2)).
\end{align}
We have
$$\sum_{p\nmid x}\sum_{p\mid y} \chi(1-x^2)\chi(1-y^2)\chi(x^2-y^2)=\sum_{p\nmid x}\sum_{p\mid y} \chi(1-x^2)=-2p^{2\alpha-2},$$ so \eqref{e33} yields
\begin{align*}
S&=-4p^{2\alpha-2}+\sum_{p\nmid x}\sum_{p\nmid y}\chi((1-x^2)(1-y^2)(x^2-y^2))\\
&=-4p^{2\alpha-2}+\sum_{p\nmid x}\chi(1-x^2)\sum_{p\nmid y}\chi((1-y)(x^2-y))\left\lbrace 1+\chi(y)\right\rbrace\\
&= -4p^{2\alpha-2}+\sum_{p\nmid y}\chi(1-y)\left\lbrace 1+\chi(y)\right\rbrace \sum_{p\nmid x}\chi((1-x^2)(y-x^2))\\
&=-4p^{2\alpha-2}+\sum_{p\nmid y}\chi(1-y)\left\lbrace 1+\chi(y)\right\rbrace \sum_{p\nmid x}\chi((1-x)(y-x))\left\lbrace 1+\chi(x)\right\rbrace  \\
&=-4p^{2\alpha-2}+\sum_{p\nmid x} \sum_{p\nmid y}\chi((1-x)(1-y)(y-x))\left\lbrace 1+\chi(x)\right\rbrace\left\lbrace 1+\chi(y)\right\rbrace.
\end{align*}
Now, we break the sum $S$ into three parts as in \cite{evans1981number}. Let
 \begin{equation}\label{e10}
S=-4p^{2\alpha-2}+I+2J+K,
\end{equation}
where
$$I=\sum_{p\nmid x} \sum_{p\nmid y}\chi((1-x)(1-y)(y-x)),$$
$$J=\sum_{p\nmid x} \sum_{p\nmid y}\chi((1-x)(1-y)(y-x)x),$$ and
$$K=\sum_{p\nmid x} \sum_{p\nmid y}\chi((1-x)(1-y)(y-x)xy).$$
We evaluate $I$ using Lemma \ref{6}.
\begin{align}\label{e34}
I&=\sum_{p\nmid x}\sum_{p\nmid y}\chi((1-x)(1-y)(y-x))\notag \\
&=\sum_{p\nmid x}\chi(1-x)\sum_{p\nmid y}\chi((1-y)(x-y))\notag\\
&=\sum_{p\nmid x}\chi(1-x)\left[\sum_{y}\chi((1-y)(x-y))- \sum_{p\mid y}\chi((1-y)(x-y))\right]\notag \\
&=\sum_{\substack{p\nmid x\\p\nmid 1-x}}\chi(1-x)\left[ -p^{\alpha-1}-p^{\alpha-1}\chi(x)\right] \notag\\
&=2p^{2\alpha-2}.
\end{align}
We further evaluate $J$ and evoke Lemma \ref{6} for the same.
\begin{align}\label{e35}
J&=\sum_{p\nmid x}\sum_{p\nmid y}\chi((1-x)(1-y)(y-x)x)\notag \\
&=\sum_{p\nmid x,x-1}\chi(x(1-x))\sum_{p\nmid y}\chi((1-y)(y-x))\notag \\
&=\sum_{p\nmid x,x-1}\chi(x(1-x))\left[ -p^{\alpha-1}-p^{\alpha-1}\chi(x)\right] \notag \\
&=2 p^{2\alpha-2}.
\end{align}
Using \eqref{e34}, \eqref{e35} and Lemma \ref{10}, \eqref{e10} yields
\begin{equation}\label{e36}
 S=-4 p^{2\alpha-2}+I+2J+K=2p^{2\alpha-2}+J(\psi,\chi)^2+\overline{J(\psi,\chi)}^2.
\end{equation}
Combining \eqref{eq2} and \eqref{e36}, we obtain $$f(p^\alpha)=\dfrac{p^{2\alpha-2}(p^2-20p+81)+J(\psi,\chi)^2+\overline{J(\psi,\chi)}^2}{64}.$$
Finally putting the value of $f(p^{\alpha})$ in \eqref{eq1}, we complete the proof.
\end{proof}


\section{Acknowledgement}
We are grateful to the referees for many helpful comments.
\section{Declarations}
\begin{itemize}
	\item Funding: Not applicable.
	\item Data availability: Not applicable.
	\item Code availability: Not applicable.
	\item Conflict of interest: The authors declare that there is no conflict of interest.
\end{itemize}

\end{document}